\documentclass[12pt]{article}
\usepackage[utf8]{inputenc}
\usepackage[toc,page]{appendix}
\usepackage{geometry}
\usepackage{bbm}
\usepackage{amsmath,amsthm,amssymb,amsfonts}
\usepackage{mathrsfs} 
\usepackage[colorlinks, linkcolor=blue]{hyperref}
\newtheorem{theorem}{Theorem}
\newtheorem{lemma}[theorem]{Lemma}
\newtheorem{remark}[theorem]{Remark}

\newtheorem{proposition}[theorem]{Proposition}

\numberwithin{equation}{section}
\numberwithin{theorem}{section}
\usepackage{graphicx}
\usepackage{xcolor}
\usepackage{academicons}
\usepackage{orcidlink}
\usepackage{enumitem,lipsum}
\providecommand{\keywords}[1]
{
  \small	
  \textbf{Keywords } #1
}
\providecommand{\thank}[1]
{
  \small	
  \textbf{Acknowledgment } #1
}

\title{\textbf{The defocusing Calogero–Moser derivative nonlinear Schrödinger equation with a nonvanishing condition at infinity}}
\date{}
\author{Xi Chen\thanks{Université Paris-Saclay, LMO (UMR 8628). Email: \texttt{xi.chen@universite-paris-saclay.fr}}}
\begin{document}

\maketitle
\begin{abstract}
We consider the defocusing Calogero–Moser derivative nonlinear Schrödinger equation
\begin{align*}
i \partial_{t} u+\partial_{x}^2 u-2\Pi D\left(|u|^{2}\right)u=0, \quad (t,x ) \in \mathbb{R} \times \mathbb{R}
\end{align*}
posed on $E := \left\{u \in L^{\infty}(\mathbb{R}): u' \in  L^{2}(\mathbb{R}), u'' \in L^{2}(\mathbb{R}),  |u|^{2}-1 \in L^{2}(\mathbb{R})\right\}$. We prove the global well-posedness of this equation in $E$. Moreover, we give an explicit formula for the chiral solution to this equation.
\end{abstract}
\keywords{Calogero–Moser derivative nonlinear Schrödinger equation, Global well-posedness, Explicit formula.}\\\\
\thank{The author is currently a PhD student at Laboratoire de Mathématique d’Orsay (UMR 8628) of Université Paris-Saclay, and he would like to thank his PhD advisor Patrick Gérard for his supervision of this paper.}
\tableofcontents{}
\section{Introduction}
We consider the following defocusing Calogero–Moser derivative nonlinear Schrödinger equation
\begin{equation}
\label{1.1}
i \partial_{t} u+\partial_{x}^2 u-2\Pi D\left(|u|^{2}\right)u=0, \quad (t,x ) \in \mathbb{R} \times \mathbb{R}.
\end{equation}
Here $\Pi$ denote the Riesz-Szeg\H{o} orthogonal projector from $L^2(\mathbb{R})$ onto 
\begin{align*}
L_{+}^2(\mathbb{R}):=\left\{f \in L^2(\mathbb{R}): \operatorname{supp}(\hat{f}) \subset[0,+\infty)\right\},
\end{align*}
and it is given by
\begin{equation}
\Pi(f)(x):=\frac{1}{2 \pi} \int_0^{\infty} e^{i \xi x} \hat{f}(\xi) d \xi, \quad \forall f \in L^2(\mathbb{R}).
\end{equation}
We remark that the defocusing Calogero–Moser derivative nonlinear Schrödinger equation was first introduced in \cite{60} under the name of intermediate nonlinear Schrödinger equation. On the other hand, the focusing Calogero–Moser derivative nonlinear Schrödinger equation was introduced in \cite{22} as a formal continuum limit of classical Calogero–Moser systems \cite{37,38}. In \cite{5}, Gérard and Lenzmann studied the focusing Calogero–Moser derivative nonlinear Schrödinger equation 
\begin{equation}
\label{1.5}
i \partial_t u+\partial_x^2 u+2\Pi D\left(|u|^2\right) u=0
\end{equation}
on the line. They derived that for any solution $u \in H_{+}^s(\mathbb{R}): = H^s(\mathbb{R}) \cap L_{+}^2(\mathbb{R})$ with $s$ sufficiently large, the equation (\ref{1.5}) enjoys a Lax pair structure and infinite conservation laws. Based on the conservation laws, they showed the global well-posedness of (\ref{1.5}) in $H_{+}^s(\mathbb{R})$ for any $s > \frac{1}{2}$ with small mass initial data ($\|u_0\|_{L^2}^2 \leq 2\pi$). Killip, Laurens and Vişan combined the explicit formulae of (\ref{1.1}) and (\ref{1.5}) with the tools and techniques of commuting
flows to deduce the global well-posedness of (\ref{1.1}) and (\ref{1.5}) (with small mass initial data) in $H_{+}^s(\mathbb{R})$ ($s \geq 0$) \cite{2}. Hogan and Kowalski showed that for arbitrarily small $\varepsilon > 0$ there exists initial data $u_0 \in H_{+}^{\infty}$ of mass $2\pi + \varepsilon$ to the equation (\ref{1.5}) such that the corresponding maximal lifespan solution $u:\left(T_{-}, T_{+}\right) \times \mathbb{R} \rightarrow \mathbb{C}$ satisfies $\lim _{t \rightarrow T_{ \pm}}\|u(t)\|_{H^s}=\infty$ for all $s>0$ \cite{9}. K. Kim, T. Kim and Kwon constructed solutions in $\mathcal{S}_{+}(\mathbb{R}):=\mathcal{S}(\mathbb{R}) \cap L_{+}^2(\mathbb{R})$ to (\ref{1.5}) with mass arbitrarily close to $2\pi$ \cite{10}. \\\\
In \cite{23} T. Kim and Kwon obtained some soliton resolution results for finite time blow up and global solutions in $H^1(\mathbb{R})$ to \eqref{1.5}. In \cite{14}, Matsuno presented multiperiodic solutions to (\ref{1.1}) and then derived multisoliton solutions. Also, Matsuno established the linear stability of solitary wave solutions \cite{15}. In \cite{16}, Matsuno constructed nonsingular $N$-phase solutions for \eqref{1.5} and explores their properties, including $N$-soliton solutions obtained via a long-wave limit. Recently, J. Chen and Pelinovsky studied the stability of nonzero constant backgrounds for both \eqref{1.1} and \eqref{1.5}, characterized traveling periodic wave solutions by using Hirota’s bilinear method, and constructed families of breathers which describe solitary waves moving across the stable background \cite{21}.
\\\\
Furthermore, Badreddine characterized the zero dispersion limit solution to (\ref{1.1}) and (\ref{1.5}) with initial data $u_0 \in L_{+}^2(\mathbb{R}) \cap L^{\infty}(\mathbb{R})$ by explicit formulae, and she identified the zero dispersion limit in terms of the branches of the multivalued solution of the inviscid Burgers–Hopf equation \cite{13}. Badreddine also showed the global well-posedness of (\ref{1.1}) and (\ref{1.5}) (with small mass initial data) on the circle in $H_{+}^s(\mathbb{T})$ for any $s \geq 0$ \cite{11}, and she characterized the traveling wave solutions to (\ref{1.1}) and (\ref{1.5}) on the circle \cite{12}. \\\\ 
The positive Fourier frequency condition $\operatorname{supp}(\hat{f}) \subset[0,+\infty)$ is interpreted as a chirality condition as in \cite{22}. As shown in the previous paragraph, most of the above studies are on chiral solutions, while in the first part of our article, we consider non-chiral solutions.\\\\
In this paper, we first study the Cauchy problem of \eqref{1.1} defined on the space
\begin{equation}
\label{1.4}
E := \left\{u \in L^{\infty}(\mathbb{R}): u' \in L^{2}\left(\mathbb{R}\right), u'' \in L^{2}\left(\mathbb{R}\right), |u|^{2}-1 \in L^{2}\left(\mathbb{R}\right)\right\}.
\end{equation}
It is easy to endow the space $E$ with a structure of complete metric space by introducing the following distance function,
\begin{equation}
d_{E}(u, \tilde{u})=\|u-\tilde{u}\|_{L^{\infty}} + \|u'-\tilde{u}'\|_{L^2}+\|u''-\tilde{u}''\|_{L^2}+\left\||u|^{2}-|\tilde{u}|^{2}\right\|_{L^{2}}.
\end{equation}
We also introduce the following Zhidkov space (see also \cite{6,8})
\begin{equation}
\label{1.606}
X^2\left(\mathbb{R}\right):=\left\{u \in L^{\infty}\left(\mathbb{R}\right): u' \in L^2(\mathbb{R}), u'' \in L^2(\mathbb{R})\right\},
\end{equation}
equipped with the norm
\begin{align*}
\|u\|_{X^2} = \|u\|_{L^{\infty}} + \|u'\|_{L^{2}} + \|u''\|_{L^{2}}.
\end{align*}
It is obvious that $E = X^2(\mathbb{R}) \cap \{|u|^2 -1 \in L^2(\mathbb{R})\}$. However, $E$ is not a vector space.\\\\
Furthermore, we introduce the following Hardy--Zhidkov space
\begin{equation}
X_{+}^2\left(\mathbb{R}\right):=\left\{u \in L_{+}^{\infty}\left(\mathbb{R}\right): u' \in L_{+}^2(\mathbb{R}), u'' \in L_{+}^2(\mathbb{R})\right\} \subset X^2(\mathbb{R}),
\end{equation}
equipped with the same norm as $\|\cdot\|_{X^2}$. In this paper, we also study the explicit formula of the solution to \eqref{1.1} in
\begin{equation}
\label{1.8}
E_{+}: = X_{+}^2\left(\mathbb{R}\right) \cap \{|u|^2 -1 \in L^2(\mathbb{R})\},
\end{equation}
where $E_{+} \subset E$.\\\\
In fact, it was Zhidkov who first introduced the following space
\begin{align*}
X^1(\mathbb{R}): =\left\{u \in L^{\infty}(\mathbb{R}): u^{\prime} \in L^2(\mathbb{R})\right\}
\end{align*}
to prove the global well-posedness results of the Gross–Pitaevskii equation \cite{6,8}
\begin{equation}
\label{1.6}
i \partial_t u+\Delta u=\left(|u|^2-1\right) u
\end{equation}
in the space 
\begin{align*}
\left\{u \in X^1(\mathbb{R}):|u|^2-1 \in L^2(\mathbb{R})\right\}.
\end{align*}
Using the strategy of Kato \cite{19}, Béthuel and Saut derived the global well-posedness of (\ref{1.6}) in $1+H^1\left(\mathbb{R}^d\right)$ for $d =2, 3$. Also, Gérard used the Strichartz estimates to obtain the global well-posedness of (\ref{1.6}) in the energy space
\begin{align*}
\left\{u \in H_{\mathrm{loc}}^1(\mathbb{R}^d): \nabla u \in L^2(\mathbb{R}^d),|u|^2-1 \in L^2(\mathbb{R}^d)\right\}
\end{align*}
for $d = 2, 3$ \cite{17}.
\subsection{Main results}
In this paper, we first study the Cauchy problem of (\ref{1.1}) defined on $E$ and we obtain the following global well-posedness result. Here, we recall the definition of $E$ from \eqref{1.4} and the definition of $X^2$ from \eqref{1.606}.
\begin{theorem}
\label{theorem 1.1}
Given $u_0 \in E$, there exists a unique solution $u \in C\left(\mathbb{R}; E\right)$ of (\ref{1.1}) with the initial data $u(0) = u_0$, and this global solution satisfies $\sup_{t \in \mathbb{R}} \|u(t)\|_{X^2} < +\infty$ and $\sup_{t \in \mathbb{R}} \||u(t)|^2-1\|_{L^2} < +\infty$. Furthermore, for every $T>0$, the flow map $u_{0} \in E \mapsto u \in C\left([-T, T]; E\right)$ is continuous.
\end{theorem}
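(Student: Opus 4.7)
The plan is to adapt the Zhidkov strategy familiar from Gross--Pitaevskii-type equations to our setting. Given $u_0 \in E$, I would write $u(t, x) = u_0(x) + v(t, x)$ and reformulate \eqref{1.1} as the Cauchy problem
\begin{equation*}
i\partial_t v + \partial_x^2 v = 2\Pi D(|u_0 + v|^2)(u_0 + v) - \partial_x^2 u_0, \qquad v(0) = 0,
\end{equation*}
to be solved for $v$ in $C([-T, T]; H^2(\mathbb{R}))$. The one-dimensional Sobolev embedding $H^2 \hookrightarrow L^\infty$ ensures that any such $v$ gives $u \in X^2$, and the pointwise identity $|u|^2 - 1 = (|u_0|^2 - 1) + 2\mathrm{Re}(\overline{u_0}\,v) + |v|^2$ combined with $|u_0|^2 - 1 \in L^2$, $u_0 \in L^\infty$, and $v \in L^2 \cap L^\infty$ places $u(t)$ in $E$. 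A short computation shows that this correspondence is bicontinuous between $u_0 + C_t H^2$ and a neighbourhood of $u_0$ in $C_t E$ equipped with $d_E$, so that the two Cauchy problems are equivalent.

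Local existence would come from a contraction mapping on a small closed ball of $C([-T, T]; H^2(\mathbb{R}))$, applied to the Duhamel form of the equation for $v$. The crucial product-rule estimates go as follows: since $u_0'' \in L^2$ we have $u_0' \in H^1 \hookrightarrow L^\infty$, so for $u = u_0 + v$ with $v \in H^2$ a bookkeeping argument gives $|u|^2 - 1 \in H^2$; the $L^2$-boundedness of $\Pi$ then yields $\Pi D(|u|^2) = -i\partial_x \Pi(|u|^2 - 1) \in H^1 \hookrightarrow L^\infty \cap L^2$, and finally $\Pi D(|u|^2) \cdot u$ lies in $H^1$, with norms controlled by $\|u_0\|_{X^2}$, $\||u_0|^2 - 1\|_{L^2}$ and $\|v\|_{H^2}$. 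The main obstacle is that this direct estimate only places the nonlinearity in $H^1$, one derivative short of the $H^2$ regularity needed to close a fixed point in $C_t H^2$; pushing further naively would require $u''' \in L^2$, which is not available. I would bridge this gap either by using the Kato-type local smoothing estimate for the 1D Schr\"odinger group, which supplies the missing half-derivative in a spacetime Strichartz-type norm, or by first proving well-posedness for smooth approximations of $u_0$ and passing to the limit using stability in the full $d_E$ metric.

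Global extension would rely on uniform-in-time a priori control of the $E$-norm, coming from the conservation laws of \eqref{1.1}: a renormalised mass $\int(|u|^2 - 1)\,dx$, a momentum, and a defocusing energy whose positive-definite leading part controls $\|u_x\|_{L^2}^2 + \||u|^2 - 1\|_{L^2}^2$. Combined with the embedding $H^1 \hookrightarrow L^\infty$ in one dimension, these give uniform bounds on $\|u\|_{X^1}$ and $\||u|^2 - 1\|_{L^2}$. For $\|u_{xx}\|_{L^2}$ I would invoke a higher conserved quantity from the integrable hierarchy associated with \eqref{1.1}, or alternatively a Gronwall argument applied to the equation differentiated twice in $x$. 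Uniqueness and continuity of the flow map in the $d_E$ metric would then follow from an $L^2 \cap L^\infty$ difference estimate: if $u, \tilde u \in C_t E$ both solve \eqref{1.1}, the difference $w = u - \tilde u$ satisfies a linear Schr\"odinger equation whose coefficients are bounded by $\|u\|_{X^2} + \|\tilde u\|_{X^2}$, and a Gronwall argument controls $d_E(u(t), \tilde u(t))$ by $d_E(u(0), \tilde u(0))$ times a quantity growing exponentially in $t$.
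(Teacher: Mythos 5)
Your overall architecture (local theory, then conservation laws for a priori bounds, then a bootstrap) matches the paper's, and you correctly identify the central obstruction: the nonlinearity $\Pi D(|u|^2)u$ loses one derivative, so a plain contraction in $C_tH^2$ cannot close. But the proposal has genuine gaps precisely at the points where the real work happens. The most serious one concerns the conservation laws. You invoke a renormalized mass $\int(|u|^2-1)\,dx$, which is not even well defined on $E$ (membership $|u|^2-1\in L^2$ gives no integrability), and an energy ``whose positive-definite leading part controls'' $\|u_x\|_{L^2}^2+\||u|^2-1\|_{L^2}^2$. Neither the conservation nor the coercivity of such functionals is automatic here. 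Since $u$ is merely bounded and never square-integrable, even \emph{proving} that a candidate functional is conserved is a substantial issue: the paper's Lemma \ref{lemma 3.5} regularizes by $\chi_{\varepsilon}(x)=(1-i\varepsilon x)^{-1}$, applies the Lax equation $\frac{d}{dt}L_u=[B_u,L_u]$ to the quantities $\langle L_u^k(u\chi_{\varepsilon}),u\chi_{\varepsilon}\rangle$, and controls the error terms $r_{\varepsilon}$ as $\varepsilon\to0$; nothing in your plan addresses this. Moreover, coercivity is not a matter of a positive-definite leading part: expanding the conserved quantity $I_1(u)=\|Du+u\Pi(|u|^2-1)\|_{L^2}$ after the gauge transform produces the cubic term $\frac{1}{12}\int(|v|^2-1)^3\,dx$, which has no sign, and the paper must absorb its negative part (using $||v|^2-1|\le 1$ on the set $\{|v|<1\}$) to reach $I_1^2\ge\frac16\||v|^2-1\|_{L^2}^2$. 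Finally, your fallback of a Gronwall argument for $\|u_{xx}\|_{L^2}$ cannot deliver the statement: Gronwall yields at best exponentially growing bounds, whereas the theorem asserts $\sup_{t\in\mathbb{R}}\|u(t)\|_{X^2}<+\infty$; only a second conserved quantity (the paper's $I_2$, from the Lax hierarchy) gives that uniform bound.

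The second gap is in the local theory and the flow map. Of your two fixes for the derivative loss, the approximation-plus-stability route is circular as stated: ``stability in the full $d_E$ metric'' is exactly what is missing, because difference estimates for quasilinear equations also lose a derivative, so one obtains contraction of differences only in $L^2$, never directly in $H^2$. This is why the paper runs Kato's linear iteration $\partial_t u^{k+1}=i\partial_x^2u^{k+1}-4u^k\Pi(\Re(\bar u^k\partial_xu^{k+1}))$ with energy estimates to get uniform $H^2$ bounds, contracts only in $L^2$, and then invokes a Bona--Smith (or frequency-envelope) argument to recover convergence and continuity of the flow map at the $H^2$ level. For the same reason, your closing claim that a Gronwall argument on the difference equation controls $d_E(u(t),\tilde u(t))$ --- a metric that includes $\|u''-\tilde u''\|_{L^2}$ --- by $d_E(u(0),\tilde u(0))$ cannot work as stated. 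Your other suggested fix, Kato-type local smoothing in the style of derivative-NLS theory, is a legitimately different possible route, but it is unproven here and is complicated by the non-decaying background: the mixed-norm smoothing estimates act on $L^2$ data, while the solution itself only satisfies $u-\mathrm{e}^{it\partial_x^2}u_0\in H^2$.
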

\noindent As shown in Theorem \ref{theorem 1.1}, the global solution in $E$ has uniform bound as $t$ goes to infinity. This naturally leads to an interesting problem of the long time dynamics of global solutions in $E$. In fact, although no formal results have been established yet, $H^2(\mathbb{R})$ solutions to \eqref{1.1} are likely to satisfy scattering as the equation does not admit solitons in this setting. However, the case of \eqref{1.1} posed on $E$ might be different since the equation admits solitons in this case (see \cite{14} for details). Also, we note that the classification of traveling waves in $E$ is still unknown. Moreover, the stability of solutions in  $E$ has not been completely concluded yet (see partial results in \cite{21}).
\\\\
Now we introduce a popular tool that may help  partially resolve the problems mentioned above. In \cite{1}, Gérard established an explicit formula for the $H_{\text{real}}^2(\mathbb{R})$ solution to the Benjamin–Ono equation on the line. According to his method, Killip, Laurens and Vişan also obtained an explicit formula for the $H_{+}^s(\mathbb{R})$($s\geq 0$) solution to (\ref{1.1}) \cite{2}. In this paper, we establish an explicit formula for the solution to (\ref{1.1}) in $E_{+}$. Here, we recall the definition of $E_{+}$ from \eqref{1.8}.
\begin{theorem}
Given $u_0 \in E_{+}$, let $u \in C\left(\mathbb{R}; E_{+}\right)$ be the corresponding solution of (\ref{1.1}). Then we give the following explicit formula: For every $t \in \mathbb{R}$ and for every $z \in \mathbb{C}_{+}: = \{z \in \mathbb{C}: \operatorname{Im}(z)>0\}$, $u(t, z)$ identifies to
\begin{equation}
\label{1.7}
\begin{aligned}
& \quad \mathrm{e}^{i  t \partial_{x}^{2}} u_{0} (z) - \frac{t}{ i \pi} I_{+}\left[\left( G +2 t L_{u_{0}}-z \mathrm{Id}\right)^{-1} \left(T_{u_{0}}T_{\bar{u}_0}\mathrm{e}^{-i t \partial_{x}^{2}}  \left(\frac{\mathrm{e}^{i  t \partial_{x}^{2}} u_{0}(x)- \mathrm{e}^{i  t \partial_{x}^{2}} u_{0}(z)}{x-z}\right)\right)\right]\\ & = 
\mathrm{e}^{i  t \partial_{x}^{2}} u_{0} (z) \\ & - 2t \left[\left(Id + 2t\mathrm{e}^{i t \partial_{x}^{2}}T_{u_{0}}T_{\bar{u}_0}\mathrm{e}^{-i t \partial_{x}^{2}}(G-z I d)^{-1} \right)^{-1}\mathrm{e}^{i t \partial_{x}^{2}}T_{u_{0}}T_{\bar{u}_0}\mathrm{e}^{-i t \partial_{x}^{2}}  \left(\frac{\mathrm{e}^{i  t \partial_{x}^{2}} u_{0}(x)- \mathrm{e}^{i  t \partial_{x}^{2}} u_{0}(z)}{x-z}\right)\right] (z). 
\end{aligned}
\end{equation}
Here $u(t,z)$ and $\mathrm{e}^{i t \partial_{x}^{2}} u_{0}(z)$ are defined by the Poisson's formulation (\ref{2.6}), and the operators $G, L_{u_0}, T_{u_0}$ are defined in the subsection \ref{subsection 2.3}.
\end{theorem}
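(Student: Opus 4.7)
My plan is to derive the explicit formula by exploiting a Lax-pair structure for (\ref{1.1}) on $E_+$ that mirrors the one used by Gérard \cite{1} for the Benjamin--Ono equation and by Killip--Laurens--Vişan \cite{2} for the chiral CMDNLS, but adapted to the nonvanishing boundary condition. The first step is to verify that along a smooth solution $u(t) \in E_+$ the Lax operator $L_{u(t)}$ defined in subsection \ref{subsection 2.3} evolves by unitary conjugation under a suitable propagator, so that $(L_{u(t)} - z\mathrm{Id})^{-1}$ can be transported back to $L_{u_0}$; in parallel, I would track how the translation generator $G$ evolves under the Heisenberg flow of $e^{it\partial_x^2}$, showing that the conjugation $e^{it\partial_x^2} G e^{-it\partial_x^2}$ produces a linear drift in $t$ that, together with the isospectral transport of $L_{u_0}$, assembles the combination $G + 2tL_{u_0}$ appearing in (\ref{1.7}).

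The second step is to recover $u(t,z)$ for $z \in \mathbb{C}_+$ by applying the Cauchy-type projector $I_+$ to a difference-quotient test vector. Concretely, for fixed $z \in \mathbb{C}_+$ the function
\[
x \mapsto \frac{e^{it\partial_x^2}u_0(x) - e^{it\partial_x^2}u_0(z)}{x - z}
\]
lies in $L^2_+(\mathbb{R})$ as a function of $x$ and plays the role of a Cauchy reproducing kernel, so pairing it with $T_{u_0}T_{\bar u_0}e^{-it\partial_x^2}$ and then with the resolvent $(G + 2tL_{u_0} - z\mathrm{Id})^{-1}$, followed by $I_+$ and evaluation at $z$, should reproduce the nonlinear correction to the Poisson extension $e^{it\partial_x^2} u_0(z)$ of the free evolution, giving the first identity in (\ref{1.7}). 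The second identity then follows from the algebraic resolvent identity
\[
(A + B - z\mathrm{Id})^{-1} = (A - z\mathrm{Id})^{-1}\bigl(\mathrm{Id} + B(A - z\mathrm{Id})^{-1}\bigr)^{-1}
\]
applied with $A = G$, $B = 2tL_{u_0}$, combined with the decomposition of $L_{u_0}$ into a translation-generator part and a Toeplitz part $T_{u_0}T_{\bar u_0}$, and with a conjugation by $e^{it\partial_x^2}$ that absorbs the free piece into the linear propagator.

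The third step is to close the argument by an approximation procedure. I would first establish (\ref{1.7}) on a dense regular subclass of $E_+$, for instance initial data $u_0$ differing from a smooth reference profile by a function in $H^s_+$ with $s$ large, where the operators of subsection \ref{subsection 2.3} are manifestly well-defined and the Lax-pair manipulations can be justified pointwise in $t$ and $z$; then I would pass to the limit using the flow continuity from Theorem \ref{theorem 1.1} together with the continuity of the Poisson extension on $E_+$. I expect the main obstacle to be the operator-theoretic setup in the nonvanishing regime: since $u_0 \notin L^2$, the Toeplitz operators $T_{u_0}$ and $T_{\bar u_0}$ are bounded on $L^2_+(\mathbb{R})$ but not compact, so $L_{u_0}$ is not a relatively compact perturbation of $G$, and proving that $G + 2tL_{u_0} - z\mathrm{Id}$ is invertible for $z \in \mathbb{C}_+$, together with verifying the applicability of the resolvent identity above on the correct domain, will require careful spectral analysis on the Hardy--Zhidkov space $X^2_+(\mathbb{R})$ rather than the soft compactness arguments available in the chiral $L^2_+$ setting of \cite{1,2}.
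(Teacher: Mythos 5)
Your overall skeleton --- a Lax-pair conjugation assembling $G+2tL_{u_0}$, recovery of $u(t,z)$ via $I_{+}$ and a resolvent applied to the difference quotient, then the algebraic resolvent identity to convert between the two displayed forms --- matches the structure of the paper's derivation, which follows Gérard \cite{1}. (Minor garbling aside: the drift $2tL_{u_0}$ comes from conjugating $G$ by the Lax propagator $U(t)$ via the commutator identity $[G,B_u]=2L_u+i[G,L_u^2]$, while the conjugation $\mathrm{e}^{-it\partial_x^2}G\mathrm{e}^{it\partial_x^2}=G+2tD$ is used only later, in the algebraic passage to the second form.) But there is a genuine gap exactly at the point you defer to your third step, and your proposed fix does not repair it. The central difficulty of the nonvanishing regime is not the invertibility of $G+2tL_{u_0}-z\mathrm{Id}$ or the non-compactness of the Toeplitz operators: since $L_{u_0}$ is self-adjoint, $\operatorname{Im}\left\langle (G+2tL_{u_0})f\mid f\right\rangle=\operatorname{Im}\left\langle Gf\mid f\right\rangle\leq 0$, so the resolvent exists for $z\in\mathbb{C}_{+}$ by the same dissipativity that handles $G$ alone; no spectral analysis beyond that is needed. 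The real problem is that $u(t)\notin L_{+}^2(\mathbb{R})$. The identity $f(z)=\lim_{\delta\to 0}\frac{1}{2i\pi}\left\langle (G-z\mathrm{Id})^{-1}f,\chi_{\delta}\right\rangle$ that launches the whole derivation requires $f\in L_{+}^2$, and none of the objects $U(t)^{*}u(t)$, $(G-z\mathrm{Id})^{-1}u(t)$, $\mathrm{e}^{-itL_{u_0}^2}u_0$ is defined for data merely in $E_{+}$. Your ``dense regular subclass'' (smooth reference profile plus $H^s_{+}$ with $s$ large) does not remove this obstruction: such data still has modulus tending to $1$ at infinity and is not square integrable, so on your regular subclass the Lax-pair manipulations are exactly as ill-defined as in the general case. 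The obstruction is decay, not smoothness, and a density-plus-flow-continuity argument cannot create a formula that has not been proved for \emph{any} non-decaying datum.

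The missing idea --- which is the actual new content of the paper's proof relative to \cite{1} and \cite{2} --- is a regularization that maps $E_{+}$ into $L_{+}^2(\mathbb{R})$ while keeping track of how it distorts the equation. The paper multiplies the solution by the cutoff $\chi_{\varepsilon}(x)=(1-i\varepsilon x)^{-1}$, which is bounded, holomorphic in $\mathbb{C}_{+}$ and decaying, so that $u(t)\chi_{\varepsilon}\in L_{+}^2(\mathbb{R})$ and Gérard's scheme applies to $u\chi_{\varepsilon}$ at fixed $\varepsilon$. The price is that $u\chi_{\varepsilon}$ no longer solves \eqref{1.1}; the error $r_{\varepsilon}=\partial_t(u\chi_{\varepsilon})-B_u(u\chi_{\varepsilon})+iL_u^2(u\chi_{\varepsilon})$ is shown to vanish in $H^1$ as $\varepsilon\to 0$, the propagated error $R_{\varepsilon}=U(t)^{*}(u\chi_{\varepsilon})-\mathrm{e}^{-itL_{u_0}^2}(u_0\chi_{\varepsilon})$ then vanishes in $L_{+}^2$, and only after the identity is established at fixed $\varepsilon$ does one pass to the limit, using \eqref{1.20}, the Poisson representation \eqref{2.6}, Lemma \ref{lemma 2.3} and dominated convergence to identify the limits of each term. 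Without this (or an equivalent) device, your first two steps cannot even be formulated for $u_0\in E_{+}$, so the proposal as written does not yield the theorem.
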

\noindent In fact, the explicit formula has already played a significant role in addressing problems related to the zero dispersion limit \cite{24, 27, 13, 25}. Thus, the first possible application of the explicit formula is to study the zero dispersion limit of solutions to \eqref{1.1} in  $E_{+}$, but a suitable entry point has not been found yet (see Section \ref{section 4} for details).\\\\ 
The explicit formula might also be useful to study the long time dynamics of solutions in $E_{+}$ since the explicit formula might provide a more direct approach to study these problems. In fact, explicit formulas have already played an essential role in the study of long time behaviors of other equations. In \cite{29}, Blackstone, Gassot, Gérard and Miller applied the explicit formula to prove that the solution to the Benjamin--Ono equation with initial datum equal to minus a soliton exhibits scattering. Also,  Gérard and Lenzmann used the explicit formula to prove soliton resolution of the half-wave maps equation for a dense subset of initial data in the energy space \cite{30}. Moreover, the explicit formula might be useful as well to study the classification of traveling waves of \eqref{1.1} in $E_{+}$.
\section{Preliminaries}
\subsection{The space $E$ and the Sobolev spaces}
Firstly, we give the following estimate.
\begin{lemma}
\label{lemma 2.1}
For any $u \in E$, we have
\begin{equation}
\|u\|_{L^{\infty}} \leq 3+C (\||u|^2-1\|_{L^2}^2 + \||u|^2-1\|_{L^2}^{2-2s}\|u'\|_{L^2}^{2s})^{1/2}
\end{equation}
for any $\frac{1}{2} < s \leq 1$.
\end{lemma}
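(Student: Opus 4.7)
The plan is to reduce the estimate to a bound on $\|v\|_{L^\infty}$ where $v := |u|^2 - 1$, then apply the fractional Sobolev embedding together with Gagliardo--Nirenberg-type interpolation, closing the circular dependence on $\|u\|_{L^\infty}$ by Young's inequality. The starting observation is the pointwise identity $|u(x)|^2 = 1 + v(x) \leq 1 + |v(x)|$, which yields
\[
\|u\|_{L^\infty} \leq (1 + \|v\|_{L^\infty})^{1/2} \leq 1 + \|v\|_{L^\infty}^{1/2}.
\]
Thus it suffices to estimate $\|v\|_{L^\infty}$ by the square of the right-hand side of the claim (up to constants).

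To bound $\|v\|_{L^\infty}$, I would invoke the one-dimensional Sobolev embedding $H^s(\mathbb{R}) \hookrightarrow L^\infty(\mathbb{R})$ for any $s > 1/2$ (easily proved via Cauchy--Schwarz on $\|\widehat v\|_{L^1}$), combined with the standard interpolation $\|v\|_{\dot H^s} \leq \|v\|_{L^2}^{1-s}\|v'\|_{L^2}^s$ for $0 \leq s \leq 1$. These give $\|v\|_{L^\infty}^2 \leq C_s\bigl(\|v\|_{L^2}^2 + \|v\|_{L^2}^{2-2s}\|v'\|_{L^2}^{2s}\bigr)$. Since $v' = u\overline{u'} + \bar u\, u'$, the chain rule bound $\|v'\|_{L^2} \leq 2\|u\|_{L^\infty}\|u'\|_{L^2}$ then produces
\[
\|v\|_{L^\infty}^2 \leq C_s\bigl(\|v\|_{L^2}^2 + (2\|u\|_{L^\infty})^{2s}\|v\|_{L^2}^{2-2s}\|u'\|_{L^2}^{2s}\bigr),
\]
which is self-referential because of the $\|u\|_{L^\infty}^{2s}$ factor on the right.

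The key step is then to close the loop. Substituting $\|u\|_{L^\infty} \leq 1 + \|v\|_{L^\infty}^{1/2}$ and using $(1+a)^{2s} \leq 2(1+a^{2s})$ for $s \leq 1$, the estimate becomes of the form $\|v\|_{L^\infty}^2 \leq A + B\|v\|_{L^\infty}^s$ with $A, B$ explicit in $\|v\|_{L^2}$ and $\|u'\|_{L^2}$. Because $s \leq 1 < 2$, Young's inequality absorbs the $\|v\|_{L^\infty}^s$ term into $\tfrac{1}{2}\|v\|_{L^\infty}^2$ on the left, leaving a bound of the desired form; taking square roots and combining with the reduction step gives the stated inequality, with the additive constant $3$ accounting for the absolute constants produced along the way. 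The main obstacle is the bookkeeping in the absorption step: one must verify that after Young the resulting powers of $\|v\|_{L^2}$ and $\|u'\|_{L^2}$ are indeed dominated by $\|v\|_{L^2}^2 + \|v\|_{L^2}^{2-2s}\|u'\|_{L^2}^{2s}$ together with the constant (using $a^{p} \leq 1 + a^q$ for $p \leq q$ to trade off lower powers against $3$). The endpoint $s=1$ is essentially free since the interpolation is trivial, whereas $s \to 1/2^+$ is delicate because the Sobolev constant $C_s$ blows up; this is absorbed into the $s$-dependent constant $C$ permitted by the statement.
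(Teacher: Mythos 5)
Your strategy --- reducing to $v:=|u|^2-1$, applying $H^s\hookrightarrow L^\infty$ plus interpolation to $v$, and closing the self-referential estimate by Young --- is genuinely different from the paper's proof. The paper never estimates $\|v\|_{L^\infty}$: it decomposes $u=\chi(u)u+(1-\chi(u))u$ with a cutoff $\chi$ supported in $\{|z|\le 3\}$, bounds the first piece pointwise by $3$, and applies the Sobolev embedding to the second piece $u_2$, using that $|u_2|\le |u|\le |u|^2-1$ on its support (so $\|u_2\|_{L^2}\le \||u|^2-1\|_{L^2}$) and, crucially, that $\|u_2'\|_{L^2}\le A\|u'\|_{L^2}$ with an \emph{absolute} constant, because $z\partial\chi(z)$ and $z\bar\partial\chi(z)$ are bounded. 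No factor of $\|u\|_{L^\infty}$ ever appears, so no absorption is needed, and the constant $3$ comes out for free. Your route can in fact be pushed to a complete proof, but the closing bookkeeping --- which you yourself flag as the main obstacle --- does not work as you describe it, in two specific ways.

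First, the absorption step. Writing $\Lambda_2:=\|v\|_{L^2}^{2-2s}\|u'\|_{L^2}^{2s}$ and $\Lambda:=\|v\|_{L^2}^2+\Lambda_2$, Young's inequality applied to $B\|v\|_{L^\infty}^s$ with $B\sim\Lambda_2$ leaves the term $\Lambda_2^{2/(2-s)}=\|v\|_{L^2}^{(4-4s)/(2-s)}\|u'\|_{L^2}^{4s/(2-s)}$, and the exponent $4s/(2-s)$ on $\|u'\|_{L^2}$ is strictly \emph{larger} than $2s$. So the power is too high, not too low: your trade $a^p\le 1+a^q$ for $p\le q$ runs in the wrong direction, and the verification you propose is false as stated, since $\Lambda_2^{2/(2-s)}$ is not dominated by $1+\Lambda$ when $\Lambda_2$ is large (the exponent $2/(2-s)$ exceeds $1$). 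The repair is to treat $\Lambda_2$ as a single quantity: since $2/(2-s)\le 2$ one has $\Lambda_2^{2/(2-s)}\le 1+\Lambda_2^2$, hence $\|v\|_{L^\infty}^2\le C_s(1+\Lambda^2)$, i.e.\ domination by the \emph{square} of the claimed right-hand side; this still suffices after taking square roots twice, giving $\|v\|_{L^\infty}\lesssim 1+\Lambda$ and then $\|u\|_{L^\infty}\le 1+\|v\|_{L^\infty}^{1/2}\le 1+C_s^{1/2}(1+\Lambda^{1/2})$. Second, that last bound exposes a real discrepancy with the statement: the additive constant you produce is $1+C_s^{1/2}$, where $C_s$ is the Sobolev/Young constant blowing up as $s\to\tfrac12^+$; it is not $3$, and it cannot be massaged into the stated form, because $\Lambda$ can be arbitrarily small (take $u\equiv 1$, where $\Lambda=0$), so an $s$-dependent additive constant is a genuinely weaker conclusion than ``$3+C\Lambda^{1/2}$'' with all $s$-dependence multiplicative. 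To recover the literal statement along your lines you need an additional argument, e.g.\ a refined absorption that keeps a factor of $\Lambda$ in every term (so that the bound on $\|v\|_{L^\infty}$ tends to $0$ as $\Lambda\to 0$), followed by a case analysis according to whether $\Lambda$ is small or large. The paper's cutoff decomposition sidesteps both issues at once, which is what makes it the cleaner argument here.
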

\begin{proof}
Let $\chi \in C_{0}^{\infty}(\mathbb{C})$ such that $0 \leqslant \chi \leqslant 1, \chi(z)=1 \text { for }|z| \leqslant 2 \text {, and } \chi(z)=0 \text { for }|z| \geqslant 3 \text {. }$  Let us decompose
\begin{align*}
u=u_{1}+u_{2}, \quad u_{1}=\chi(u) u, \quad u_{2}=(1-\chi(u)) u.
\end{align*}
We have
\begin{align*}
\left\|u_{1}\right\|_{L^{\infty}} \leqslant 3.
\end{align*}
Since $|u| \geq 2$ on the support of $u_2$, we have
\begin{align*}
\left\|u_{2}\right\|_{L^{2}} \leqslant\left\||u|^{2}-1\right\|_{L^{2}}.
\end{align*}
On the other hand,
\begin{align*}
u_{2}' =(1-\chi(u)-u \partial\chi(u)) u'-u \bar{\partial} \chi(u) \bar{u}'.
\end{align*}
This implies easily, for some $A=A(\chi)$,
\begin{align*}
\left\|u_{2}'\right\|_{L^{2}} \leqslant A\|u'\|_{L^{2}}.
\end{align*}
Hence
\begin{align*}
\|u\|_{L^{\infty}} & \leq \|u_1\|_{L^{\infty}} + \|u_2\|_{L^{\infty}}\\ & \leq \|u_1\|_{L^{\infty}} + \|u_2\|_{H^s} \\ & \leq 3+C (\||u|^2-1\|_{L^2}^2 + \||u|^2-1\|_{L^2}^{2-2s}\|u'\|_{L^2}^{2s})^{1/2}
\end{align*}
for any $\frac{1}{2}<s\leq 1$.
\end{proof}
\noindent Then we introduce the following lemma which will be useful in the sequel.
\begin{lemma}
\label{lemma 2.2}
We have $E+H^2(\mathbb{R}) \subset E$, and for every $v \in E, w \in H^2(\mathbb{R})$, 
\begin{equation}
\label{2.2}
\left\||v+w|^{2}-1\right\|_{L^{2}} \leq \left\||v|^{2}-1\right\|_{L^{2}} + 2 \|v\|_{L^{\infty}} \|w\|_{L^2} + \|w\|_{L^4}^2.
\end{equation}
Moreover, for every $v \in E, \tilde{v} \in E$ and $w \in H^2, \tilde{w} \in H^2$, we have
\begin{equation}
\label{2.3}
\begin{aligned}
d_{E}(v+w, \tilde{v}+\tilde{w}) & \leq C(1+\|w\|_{L^{2}} + \|\tilde{w}\|_{L^{2}}) d_{E}(v,\tilde{v}) \\ & + C(1+\|v\|_{L^{\infty}} + \|\tilde{v}\|_{L^{\infty}} + \|w\|_{H^1}+ \|\tilde{w}\|_{H^{1}})\|w-\tilde{w}\|_{H^2}.
\end{aligned}
\end{equation}
\end{lemma}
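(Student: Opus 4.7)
The plan is to verify Lemma \ref{lemma 2.2} by direct computation in each of the norms defining $d_E$. For the first assertion that $E + H^2 \subset E$, I would check each of the four defining conditions of $E$ for $v+w$. The bounds $v+w \in L^\infty$, $(v+w)' \in L^2$, and $(v+w)'' \in L^2$ follow from the triangle inequality combined with the one-dimensional Sobolev embedding $H^2(\mathbb{R}) \hookrightarrow L^\infty$, which handles the $L^\infty$ piece. The nontrivial condition $|v+w|^2 - 1 \in L^2$ follows from the expansion
$$|v+w|^2 - 1 = (|v|^2 - 1) + 2\operatorname{Re}(v\bar{w}) + |w|^2,$$
which, by Hölder's inequality, immediately yields \eqref{2.2} with $\|v\bar{w}\|_{L^2} \leq \|v\|_{L^\infty}\|w\|_{L^2}$ and $\||w|^2\|_{L^2} = \|w\|_{L^4}^2$.

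For the Lipschitz-type estimate \eqref{2.3}, I would break $d_E(v+w, \tilde{v}+\tilde{w})$ into its four constituent norms and bound each separately. The $L^\infty$ distance splits as $\|(v-\tilde{v}) + (w-\tilde{w})\|_{L^\infty} \leq \|v-\tilde{v}\|_{L^\infty} + C\|w-\tilde{w}\|_{H^1}$ by the Sobolev embedding, and these two pieces fit respectively into the $d_E(v,\tilde{v})$ factor and the $\|w-\tilde{w}\|_{H^2}$ factor on the right-hand side of \eqref{2.3}. The two derivative-norm contributions decompose identically by the triangle inequality, with $\|v'-\tilde{v}'\|_{L^2}$ and $\|v''-\tilde{v}''\|_{L^2}$ absorbed into $d_E(v,\tilde{v})$, and $\|w'-\tilde{w}'\|_{L^2}$, $\|w''-\tilde{w}''\|_{L^2}$ absorbed into $\|w-\tilde{w}\|_{H^2}$.

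The only substantive work is the modulus term $\||v+w|^2 - |\tilde{v}+\tilde{w}|^2\|_{L^2}$. Expanding both squares and regrouping gives
$$|v+w|^2 - |\tilde{v}+\tilde{w}|^2 = (|v|^2 - |\tilde{v}|^2) + 2\operatorname{Re}(v\bar{w} - \tilde{v}\overline{\tilde{w}}) + (|w|^2 - |\tilde{w}|^2),$$
and the key trick to match the symmetric form stated in \eqref{2.3} is to use the symmetrized identity
$$v\bar{w} - \tilde{v}\overline{\tilde{w}} = \tfrac{1}{2}(v+\tilde{v})(\bar{w}-\overline{\tilde{w}}) + \tfrac{1}{2}(v-\tilde{v})(\bar{w}+\overline{\tilde{w}}),$$
together with the analogous identity for $|w|^2 - |\tilde{w}|^2$. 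Applying Hölder's inequality to each piece produces contributions of the form $(\|v\|_{L^\infty} + \|\tilde{v}\|_{L^\infty})\|w-\tilde{w}\|_{L^2}$, $\|v-\tilde{v}\|_{L^\infty}(\|w\|_{L^2} + \|\tilde{w}\|_{L^2})$, and $(\|w\|_{L^\infty} + \|\tilde{w}\|_{L^\infty})\|w-\tilde{w}\|_{L^2}$; after applying $H^1 \hookrightarrow L^\infty$ to the last factor, each of these slots cleanly into the corresponding side of \eqref{2.3}.

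The argument is essentially routine algebra combined with the one-dimensional Sobolev embedding $H^1 \hookrightarrow L^\infty$, so I do not anticipate a real obstacle. The only minor subtlety is selecting the symmetric splitting of $v\bar{w} - \tilde{v}\overline{\tilde{w}}$ rather than the asymmetric one $v(\bar{w}-\overline{\tilde{w}}) + (v-\tilde{v})\overline{\tilde{w}}$, so that the resulting estimate treats the pairs $(v,w)$ and $(\tilde{v},\tilde{w})$ on an equal footing, as required by the statement.
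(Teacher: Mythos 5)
Your proposal is correct and follows essentially the same route as the paper: the paper likewise expands $|v+w|^2-1=|v|^2-1+2\operatorname{Re}(\bar v w)+|w|^2$, applies H\"older to get \eqref{2.2}, and then asserts that \eqref{2.3} follows ``along the same line''; your symmetrized splitting of $v\bar w-\tilde v\overline{\tilde w}$ and the use of $H^1(\mathbb{R})\hookrightarrow L^\infty(\mathbb{R})$ simply make explicit the details the paper leaves to the reader.
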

\begin{proof}
Given $v \in E, w \in H^{2}$, we expand
\begin{align*}
|v+w|^{2}-1=|v|^{2}-1+2 \operatorname{Re}(\bar{v} w)+|w|^{2},
\end{align*}
we get the corresponding estimates (\ref{2.2}) since we have $v \in L^{\infty}(\mathbb{R})$ and $w \in L^2(\mathbb{R}) \cap L^4(\mathbb{R})$. Then we can easily deduce that $E+H^2(\mathbb{R}) \subset E$.\\\\
Also, the estimate of $\left\||v+w|^{2}-|\tilde{v}+\tilde{w}|^{2}\right\|_{L^{2}}$ can be established along the same line as (\ref{2.2}), so we can easily deduce (\ref{2.3}).
\end{proof}
\subsection{Action of the linear Schrödinger group on the space $E$}
We start with the following lemma.
\begin{lemma}
\label{lemma 2.3}
For any $f \in E$, we have $\mathrm{e}^{it\partial_x^2}f - f \in H^2(\mathbb{R})$. Moreover, we have
\begin{equation}
\label{2.4}
\|\mathrm{e}^{it\partial_x^2} f-f\|_{L^{2}} \leqslant C|t|^{1 / 2}\|f'\|_{L^{2}}
\end{equation}
and 
\begin{equation}
\label{2.5}
\|\mathrm{e}^{it\partial_x^2} f-f\|_{H^{2}} \rightarrow 0 \text{ as } t \to 0. 
\end{equation}
\end{lemma}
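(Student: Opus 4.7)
The plan is to work on the Fourier side. First I would introduce the multiplier symbol
$$m_t(\xi) := \frac{e^{-it\xi^2}-1}{i\xi},$$
which is smooth on $\mathbb{R}$ (extending analytically to $\xi=0$, where the numerator vanishes like $-it\xi^2$) and belongs to the space $\mathcal{O}_M(\mathbb{R})$ of smooth functions with polynomially bounded derivatives of all orders. The central identity to establish is
$$e^{it\partial_x^2}f - f = m_t(D)f' \quad \text{in } \mathcal{S}'(\mathbb{R}),$$
from which $L^2$-membership and the bound (2.4) will follow almost immediately, since $f' \in L^2$ and $m_t$ will be shown uniformly bounded by $\sqrt{2|t|}$.

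Next I would verify this identity distributionally. Since $f \in L^{\infty} \subset \mathcal{S}'$, $\hat{f}$ is a well-defined tempered distribution with $i\xi\hat{f} = \hat{f'} \in L^2$. For any $\varphi \in \mathcal{S}$, using that $m_t\varphi \in \mathcal{S}$ (because $m_t \in \mathcal{O}_M$), the chain
$$\langle (e^{-it\xi^2}-1)\hat{f},\varphi\rangle = \langle \hat{f}, i\xi\, m_t\varphi\rangle = \langle i\xi\hat{f}, m_t\varphi\rangle = \langle \hat{f'}, m_t\varphi\rangle = \langle m_t\hat{f'},\varphi\rangle$$
gives the identity. The pointwise bound $|m_t(\xi)| \leq \min(|t\xi|,\, 2/|\xi|)$ (from $|e^{i\theta}-1|\leq\min(|\theta|,2)$) yields $|m_t(\xi)|^2 \leq 2|t|$, so $m_t(D):L^2 \to L^2$ has operator norm at most $\sqrt{2|t|}$, and Plancherel delivers (2.4) with $C=\sqrt{2}$.

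For $H^2$-membership I would exploit the commutativity of $\partial_x$ and $e^{it\partial_x^2}$ as Fourier multipliers: for $k=1,2$, $\partial_x^k(e^{it\partial_x^2}f - f) = e^{it\partial_x^2}f^{(k)} - f^{(k)} \in L^2$, since $f^{(k)} \in L^2$ and $e^{it\partial_x^2}$ is an $L^2$-isometry. For (2.5), the $L^2$ part vanishes as $t \to 0$ by (2.4), while the derivative parts follow from Plancherel combined with dominated convergence applied to
$$\|e^{it\partial_x^2}g - g\|_{L^2}^2 = \int_{\mathbb{R}}|e^{-it\xi^2}-1|^2\,|\hat{g}(\xi)|^2\,d\xi \quad (g = f',\, f''),$$
since the integrand is dominated by $4|\hat{g}|^2 \in L^1$ and tends to $0$ pointwise. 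The main obstacle is justifying the distributional identity in the second paragraph: $\hat{f}$ is not a function near $\xi=0$, so the essential point is the cancellation that makes $m_t$ smooth at the origin and places it in $\mathcal{O}_M$, thereby legitimizing the algebraic step $(e^{-it\xi^2}-1)\hat{f} = m_t \cdot (i\xi \hat{f})$; everything else reduces to standard Plancherel and dominated convergence.
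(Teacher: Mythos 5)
Your proof is correct and takes essentially the same approach as the paper: both factor the symbol as $e^{-it\xi^2}-1 = \xi \cdot g(t,\xi)$ with $g(t,\xi)=\mathcal{O}(|t|^{1/2})$ and apply the resulting multiplier to $f' \in L^2$ (the paper's cutoff decomposition of $g$ is just another way of exhibiting the bound your $\min(|t|\,|\xi|,\,2/|\xi|)$ estimate gives directly), then handle the derivative terms by commuting $\partial_x^k$ with the group and using dominated convergence. Your write-up is somewhat more careful than the paper's on the distributional justification (via $m_t \in \mathcal{O}_M$, which matters since $\hat f$ can be singular at $\xi = 0$ for $f \in L^\infty$), but the substance is identical.
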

\begin{proof}
Let $\chi \in C_{c}^{\infty}(\mathbb{R})$ such that $\chi = 1$ near the origin, we can write
\begin{align*}
\mathrm{e}^{-\mathrm{i} t|\xi|^{2}}-1= g(t, \xi) \xi
\end{align*}
with
\begin{align*}
g(t, \xi)=-\mathrm{i} t \chi\left(t \xi^{2}\right) \xi \int_{0}^{1} \mathrm{e}^{-\mathrm{i} s t\xi^{2}} \mathrm{~d} s+\frac{1-\chi\left(t\xi^2\right)}{\xi} \left(\mathrm{e}^{-\mathrm{i} t\xi^{2}}-1\right)=\mathcal{O}\left(|t|^{1 / 2}\right).
\end{align*}
Consequently,
\begin{align*}
\mathrm{e}^{it\partial_x^2} f-f=-i g(t, D) f' \in L^{2}\left(\mathbb{R}\right)
\end{align*}
and this yields (\ref{2.4}).\\\\
Also, we can easily observe that 
\begin{align*}
\left\|\partial_x^{2}( \mathrm{e}^{it\partial_x^2} f-f)\right\|_{L^{2}} \leqslant C\left\|f''\right\|_{L^{2}} \quad \text { and } \quad\left\|\partial_x^{2}(\mathrm{e}^{it\partial_x^2} f-f)\right\|_{L^{2}} \underset{t \rightarrow 0}{\longrightarrow} 0,
\end{align*}
combined with (\ref{2.4}), we infer (\ref{2.5}).
\end{proof}
Combining the previous lemmas, it is now possible to show that the space $E$ is kept invariant by the Schrödinger group.
\begin{lemma}
\label{lemma 2.4}
For every $t \in \mathbb{R}$, $\mathrm{e}^{it\partial_x^2}(E) \subset E$ and for every $u_0 \in E$, the map
\begin{align*}
t \in \mathbb{R} \mapsto \mathrm{e}^{it\partial_x^2} u_{0} \in E
\end{align*}
is continuous. Moreover, for every $R$, for every $T>0$, there exists $C>0$ such that, for every $u_0, \tilde{u}_0 \in E$ such that $\|u_0\|_{X^2} \leqslant R$ and $\|\tilde{u}_0\|_{X^2} \leqslant R$,
\begin{align*}
\sup _{|t| \leqslant T} d_{E}\left(\mathrm{e}^{it\partial_x^2} u_{0}, \mathrm{e}^{it\partial_x^2} \tilde{u}_{0}\right) \leqslant C d_{E}\left(u_{0}, \tilde{u}_{0}\right).
\end{align*}
\end{lemma}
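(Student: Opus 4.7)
The strategy is to bootstrap off the previous two lemmas: Lemma 2.3 tells us that $e^{it\partial_x^2}$ changes an element of $E$ only by an $H^2$-perturbation, and Lemma 2.2 controls the distance $d_E$ precisely under such perturbations. So for any $u_0 \in E$ I would write
\[
e^{it\partial_x^2} u_0 = u_0 + w_t, \qquad w_t := e^{it\partial_x^2}u_0 - u_0,
\]
and note that $w_t \in H^2(\mathbb{R})$ by Lemma 2.3. Since $u_0 \in E$, applying Lemma 2.2 gives $e^{it\partial_x^2} u_0 \in E$, which establishes invariance. Moreover, the $H^2$-norm of $w_t$ is bounded by a constant depending only on $|t|$ and $\|u_0\|_{X^2}$, using (2.4) applied to $u_0'$ together with the trivial bound $\|\partial_x^2 w_t\|_{L^2}\leq 2\|u_0''\|_{L^2}$.

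For continuity in $t$, I would first handle $t=0$ directly: apply (2.3) with $v=\tilde v=u_0$, $\tilde w=0$, $w=w_t$, obtaining
\[
d_E(e^{it\partial_x^2}u_0,\,u_0) \leq C(u_0)\,\|w_t\|_{H^2},
\]
which tends to $0$ by (2.5). For general $t_0\in\mathbb{R}$, I would use the group property and apply the previous step to $v_0 := e^{it_0\partial_x^2}u_0 \in E$, writing $e^{it\partial_x^2}u_0 - e^{it_0\partial_x^2}u_0 = e^{i(t-t_0)\partial_x^2}v_0 - v_0$ and reducing to continuity at the origin for the initial datum $v_0$.

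For the quantitative Lipschitz estimate on bounded subsets, I would write $e^{it\partial_x^2}u_0 = u_0 + w$ and $e^{it\partial_x^2}\tilde u_0 = \tilde u_0+\tilde w$ with $w,\tilde w\in H^2$, and apply (2.3) to get
\[
d_E(e^{it\partial_x^2}u_0,e^{it\partial_x^2}\tilde u_0) \leq C\bigl(1+\|w\|_{H^1}+\|\tilde w\|_{H^1}\bigr) d_E(u_0,\tilde u_0) + C\bigl(1+\|u_0\|_{L^\infty}+\|\tilde u_0\|_{L^\infty}+\|w\|_{H^1}+\|\tilde w\|_{H^1}\bigr)\|w-\tilde w\|_{H^2}.
\]
The coefficients on the right are uniformly controlled by $R$ and $T$ using Lemma 2.3 applied to $u_0$ and $\tilde u_0$. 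The essential point is the remaining factor $\|w-\tilde w\|_{H^2}$: since $w-\tilde w = e^{it\partial_x^2}(u_0-\tilde u_0) - (u_0-\tilde u_0)$ and $u_0-\tilde u_0$ has its first and second derivatives in $L^2$, reapplying the estimates of Lemma 2.3 to $u_0-\tilde u_0$ yields
\[
\|w-\tilde w\|_{H^2} \leq C(T)\bigl(\|(u_0-\tilde u_0)'\|_{L^2}+\|(u_0-\tilde u_0)''\|_{L^2}\bigr) \leq C(T)\,d_E(u_0,\tilde u_0),
\]
which closes the argument.

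The only subtlety I anticipate is keeping track of which quantities are allowed to enter each factor in (2.3); in particular one must use that $u_0-\tilde u_0$ lies in a Hilbert space (its derivatives are in $L^2$) even though neither $u_0$ nor $\tilde u_0$ individually belongs to a linear space, so that Lemma 2.3 can be invoked on the difference. Once this is observed, everything follows mechanically from Lemmas 2.2 and 2.3.
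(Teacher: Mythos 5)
Your proposal is correct and follows essentially the same route as the paper: decompose $\mathrm{e}^{it\partial_x^2}u_0 = u_0 + w_t$ with $w_t \in H^2(\mathbb{R})$ via Lemma \ref{lemma 2.3}, then invoke Lemma \ref{lemma 2.2} and its estimate \eqref{2.3} for the invariance, the continuity, and the uniform Lipschitz bound. You actually supply details the paper leaves implicit, notably the group-property reduction for continuity at a general $t_0$ and the key observation that the estimates of Lemma \ref{lemma 2.3} apply to the difference $u_0 - \tilde{u}_0$, which lies in the vector space $X^2(\mathbb{R})$ even though $E$ itself is not a vector space.
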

\begin{proof}
We write 
\begin{align*}
\mathrm{e}^{it\partial_x^2} u_{0}=u_{0}+\left(\mathrm{e}^{it\partial_x^2} u_{0}-u_{0}\right).
\end{align*}
By using Lemma \ref{lemma 2.2} and Lemma \ref{lemma 2.3},  we can easily see that $\mathrm{e}^{it\partial_x^2}(E) \subset E$, while the continuity property
\begin{align*}
\mathrm{e}^{it\partial_x^2} u_{0} \underset{t \rightarrow 0}{\longrightarrow} u_{0} \quad \text { in } E.
\end{align*}
is a consequence of (\ref{2.3}) and (\ref{2.5}). The last argument in Lemma \ref{lemma 2.4} also comes from (\ref{2.3}) and (\ref{2.5}).
\end{proof}
\begin{remark}
\label{remark 2.5}
The argument in Lemma \ref{lemma 2.4} is also valid for $u_0 \in X^2(\mathbb{R})$.
\end{remark}
\subsection{The Lax pair}
\label{subsection 2.3}
We denote by $L_{+}^p(\mathbb{R})$($1\leq p \leq \infty$) the Hardy space corresponding to $L^p(\mathbb{R})$ functions having a Fourier transform supported in the domain $\xi \geq 0$. Equivalently, the space $L_{+}^{p}(\mathbb{R})$ comprised of $f \in L^p(\mathbb{R})$ whose Poisson integral
\begin{equation}
\label{2.6}
f(z)=\int \frac{\operatorname{Im} z}{\pi|x-z|^{2}} f(x) d x 
\end{equation}
is holomorphic in the upper half-plane $\mathbb{C}_{+} : = \{z \in \mathbb{C}: \operatorname{Im}(z)>0\}$. With the Poisson's formulation (\ref{2.6}), the Hölder inequality implies
\begin{align*}
|f(z)| \lesssim(\operatorname{Im} z)^{-\frac{1}{p}}\|f\|_{L^{p}}.
\end{align*}
The Riesz-Szeg\H{o} projector $\Pi$ is the orthogonal projector from $L^2(\mathbb{R})$ onto $L_{+}^2(\mathbb{R})$. It is given by
\begin{equation}
\label{1.10}
\forall f \in L^2(\mathbb{R}), \quad \forall z \in \mathbb{C}_{+}, \quad \Pi f (z) = \frac{1}{2 \pi} \int_{0}^{\infty} \mathrm{e}^{i z \xi} \hat{f}(\xi) d \xi=   \frac{1}{2i\pi} \int_{\mathbb{R}} \frac{f(y)}{y-z} dy.
\end{equation}
The Toeplitz operator $T_b$ on $L^{2}(\mathbb{R})$ associated to a function $b \in L^{\infty}(\mathbb{R})$ is defined by
\begin{align*}
T_{b} f : =\Pi(b f), \quad f \in L^2(\mathbb{R}).
\end{align*}
For $u \in E$, the operator $L_u$ is defined by
\begin{align*}
\forall f \in \operatorname{Dom}\left(L_{u}\right)=H^{1}(\mathbb{R}), \quad  L_{u} f :=D f+uT_{\bar{u}} f \text{ with } D:=\frac{1}{i} \frac{d}{d x}.
\end{align*}
We also consider, for $u \in E$, the bounded operator defined by
\begin{align*}
B_{u}=-uT_{\partial_{x} \bar{u}}+\partial_{x} u T_{\bar{u}}+i\left(u T_{\bar{u}}\right)^{2}.
\end{align*}
Also, we recall the definition of $G$ in \cite{1},
\begin{align*}
\forall f \in \operatorname{Dom}\left(G\right): = \left\{f \in L_{+}^{2}(\mathbb{R}):  \hat{f} \in H^{1}(0, \infty)\right\},\quad  \widehat{G f}(\xi) :=i \frac{d}{d \xi}[\hat{f}(\xi)] \mathbf{1}_{\xi>0}.
\end{align*}
Here $G$ is the adjoint of the operator of multiplication by $x$ on $L_{+}^2(\mathbb{R})$, and $\left(\operatorname{Dom}\left(G\right), -iG\right)$ is maximally dissipative. Notice that $-iG$ is the infinitesimal generator of the adjoint semi–group of contractions $\left(S(\eta)^{*}\right)_{\eta \geq 0}$ with
\begin{align*}
\forall \eta \geq 0, \quad S(\eta)^{*}=\mathrm{e}^{-i \eta G}.
\end{align*}
We also notice that 
\begin{align*}
\forall f \in \operatorname{Dom}\left(G\right), \left|\hat{f}\left(0^{+}\right)\right|^2 = -4\pi\operatorname{Im}\left\langle G f \mid f\right\rangle \leq 4\pi \|Gf\|_{L^2} \|f\|_{L^2}.
\end{align*}
Therefore, we can define
\begin{align*}
\forall f \in \operatorname{Dom}\left(G\right), \quad I_{+}(f):=\hat{f}\left(0^{+}\right).
\end{align*}
In fact, as observed in \cite[Lemma 3.4]{7}, the resolvent of $G$ is given by 
\begin{equation}
\label{1.20}
\forall z \in \mathbb{C}_{+},\quad \forall f \in L_{+}^2(\mathbb{R}),\quad (G-z \operatorname{Id})^{-1} f(x)=\frac{f(x)-f(z)}{x-z},
\end{equation}
and we have
\begin{equation}
\label{1.30}
\forall z \in \mathbb{C}_{+}, \quad \forall f \in L_{+}^2(\mathbb{R}), \quad f(z)=\frac{1}{2 i \pi} I_{+}\left((G-z \operatorname{Id})^{-1} f\right).
\end{equation}
\section{Well-posedness and conservation laws}
\subsection{Local well-posedness}
In this subsection, we consider the local well-posedness of (\ref{1.1}) with initial data in $E$, where Kato’s classical iterative scheme for quasilinear evolution equations can be utilized.\\\\
We first write \eqref{1.1} as 
\begin{equation}
\label{3.1}
\partial_{t} u=i \partial_{x}^2 u-4 u \Pi(\Re(\bar{u}\partial_x u)).
\end{equation}
In view of (\ref{3.1}), we consider the following iteration scheme
\begin{equation}
\partial_{t} u^{k+1}=i \partial_{x}^2 u^{k+1}-4 u^{k} \Pi(\Re(\bar{u}^k\partial_x u^{k+1}))
\end{equation}
with initial datum $u^{k+1}(0,x) = u_0(x) \in E$. A standard energy method yields the following result.
\begin{lemma}
\label{lemma 3.2}
Let $w \in C\left([-T, T], E)\right)$ with some $T>0$ and $ w_0 \in E$. Then there exists a unique solution $w \in C\left([-T, T] ; E\right)$ such that
\begin{align*}
\partial_{t} w=i \partial_{x}^2 w - 4 u \Pi(\Re(\bar{u}\partial_x w)), \quad w(0, x)=w_{0}(x).
\end{align*}
Furthermore, if we write
\begin{align*}
w = \mathrm{e}^{it\partial_x^2} w_0 + v,
\end{align*}
then we have
\begin{equation}
\label{3.02}
\sup _{|t| \leq T}\|v(t)\|_{H^{2}} \leq C \mathrm{e}^{C \int_{-T}^{T} \|u\|_{X^2}^2 d t}\left(\|w_0'\|_{H^{1}}+ \int_{-T}^T \|u\|_{X_{+}^2}^2 \|w_0'\|_{H^1} dt\right),
\end{equation}
\end{lemma}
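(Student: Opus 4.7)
Given the (apparent) typo in the hypothesis---the first occurrence of $w$ should read $u$---the statement should be read as: for a prescribed coefficient $u \in C([-T,T],E)$ and datum $w_0 \in E$, produce a solution $w \in C([-T,T],E)$ to this linear Schr\"odinger problem. The plan is to follow Kato's classical iterative scheme. Since $w_0$ is only in $E$ and not in $H^2$, I would use the decomposition $w = \mathrm{e}^{it\partial_x^2}w_0 + v$ suggested by the statement, reducing the task to producing $v \in C([-T,T],H^2(\mathbb{R}))$ with $v(0)=0$ that solves
\begin{equation*}
\partial_t v = i\partial_x^2 v - 4u\Pi\Re\bigl(\bar u\,\partial_x(\mathrm{e}^{it\partial_x^2}w_0 + v)\bigr).
\end{equation*}
Once such a $v$ is constructed, Lemma \ref{lemma 2.4} gives $\mathrm{e}^{it\partial_x^2}w_0 \in C([-T,T],E)$ and Lemma \ref{lemma 2.2} then yields $w \in C([-T,T],E)$.

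The heart of the argument is an $H^2$ energy estimate for $v$. Differentiating $\|v\|_{L^2}^2$, $\|\partial_x v\|_{L^2}^2$ and $\|\partial_x^2 v\|_{L^2}^2$ in time and using skew-adjointness to kill the $i\partial_x^2$ part, what survives is the pairing of each $\partial_x^k\bigl(-4u\Pi\Re(\bar u\,\partial_x w)\bigr)$ with $\partial_x^k v$ for $k=0,1,2$. Splitting $w = \mathrm{e}^{it\partial_x^2}w_0 + v$, the part involving $\mathrm{e}^{it\partial_x^2}w_0$ is a forcing term bounded, using the $L^\infty$ component of $\|u\|_{X^2}$ and the preservation of $\|w_0'\|_{H^1}$ under the free evolution, by $C\|u\|_{X^2}^2\|w_0'\|_{H^1}\|v\|_{H^2}$. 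The $v$-dependent part, after Leibniz-expanding $\partial_x^2[u\Pi\Re(\bar u\,\partial_x v)]$, produces terms of order up to three in $v$. The main obstacle is the genuinely third-order contribution $\Re\int u\Pi\Re(\bar u\,\partial_x^3 v)\,\overline{\partial_x^2 v}\,dx$: this must be reorganized by integrating by parts one derivative, exploiting that $\Pi$ commutes with $\partial_x$ and is self-adjoint, together with the skew-adjointness of $\partial_x$, so as to reduce it to quadratic expressions in $\partial_x^k v$ ($k\leq 2$) with coefficients in $u, u', u''$, each controlled by $C\|u\|_{X^2}^2\|v\|_{H^2}^2$.

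Combining the two contributions yields the differential inequality
\begin{equation*}
\tfrac{d}{dt}\|v\|_{H^2}^2 \,\leq\, C\|u\|_{X^2}^2\|v\|_{H^2}^2 + C\|u\|_{X^2}^2\|w_0'\|_{H^1}\|v\|_{H^2},
\end{equation*}
from which Gronwall's lemma delivers (\ref{3.02}). Existence of $v$ is then obtained by mollifying $u$ and $w_0$ to smooth data, solving the resulting classical linear Schr\"odinger problem, and extracting a limit using the uniform $H^2$ bound; uniqueness follows from the same estimate applied to the difference of two candidate solutions with vanishing initial datum. Continuity of $t\mapsto w(t)\in E$ then comes from combining the $C([-T,T],H^2)$ regularity of $v$ with the continuity statement of Lemma \ref{lemma 2.4} applied to the free evolution of $w_0$.
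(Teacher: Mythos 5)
Your reading of the hypothesis (the first $w$ should be $u$) agrees with how the lemma is actually used, and your treatment of the diagonal third-order term $\Re\langle u\Pi\Re(\bar u\,\partial_x^3 v),\partial_x^2 v\rangle$ — self-adjointness of $\Pi$, $\Pi\partial_x=\partial_x\Pi$, one integration by parts — is exactly the right mechanism; it is the content of the estimate the paper imports from \cite{20}. The genuine gap is in your treatment of the forcing term. You claim that the part of the nonlinearity involving $F:=\mathrm{e}^{it\partial_x^2}w_0$ contributes at most $C\|u\|_{X^2}^2\|w_0'\|_{H^1}\|v\|_{H^2}$ to the energy identity. At top order this fails: the relevant pairing is $\Re\langle\partial_x^2[u\Pi\Re(\bar u\,\partial_x F)],\partial_x^2 v\rangle$, whose Leibniz expansion contains $u\Pi\Re(\bar u\,\partial_x^3F)$, and $\partial_x^3F=\mathrm{e}^{it\partial_x^2}w_0'''$ is \emph{not} in $L^2$ for $w_0\in E$ (only $w_0',w_0''\in L^2$), hence is not controlled by $\|w_0'\|_{H^1}$. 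The antisymmetry trick cannot rescue this term because it needs the \emph{same} function in both slots: writing $B(f,g):=\Re\langle u\Pi\Re(\bar u\,\partial_x f),g\rangle$, one has $|B(h,h)|\lesssim\|u\|_{X^2}^2\|h\|_{L^2}^2$, but your identity contains the single cross term $B(\partial_x^2F,\partial_x^2v)$; integrating by parts merely moves the derivative onto $v$, producing $\partial_x^3v\notin L^2$, and polarization only bounds the symmetrized sum $B(f,g)+B(g,f)$, not either summand. Indeed, already for $u\equiv1$, taking $\partial_x^2F$ and $\partial_x^2v$ frequency-localized near $N$ makes $B(\partial_x^2F,\partial_x^2v)$ of size $N\|\partial_x^2F\|_{L^2}\|\partial_x^2v\|_{L^2}$, so no uniform bound in terms of your right-hand side is possible. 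In short, splitting $w=F+v$ \emph{before} performing the top-order estimate destroys the structure that closes it.

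The paper distributes the estimates so that this cross term never appears: $v$ is estimated only in $L^2$ via Duhamel, as in \eqref{3.3} (there the nonlinearity involves just $\partial_xw$, so there is no derivative loss), while the top-order energy estimate is run on $\partial_xw$ in $H^1$ — derivatives of the \emph{full} solution, which lie in $L^2$ because $w\in E$, and for which the same-function antisymmetry does close the estimate (this is \cite[Lemma 5.4]{20}). One then recovers $\|v\|_{H^2}\leq\|v\|_{L^2}+\|\partial_xw\|_{H^1}+\|w_0'\|_{H^1}$, giving \eqref{3.4}, and Grönwall yields \eqref{3.02}. If you want to salvage your write-up, replace your direct $H^2$ energy estimate on $v$ by this combination; the remainder of your outline (regularization for existence, uniqueness from the estimate applied to differences, continuity of $t\mapsto w(t)\in E$ via Lemma \ref{lemma 2.2} and Lemma \ref{lemma 2.4}) is sound.
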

\begin{proof}
By Duhamel's formula, we have
\begin{align*}
v(t)= \int_{0}^t \mathrm{e}^{i(t-s)\partial_x^2} (- 4 u \Pi(\Re(\bar{u}\partial_x w)))ds,
\end{align*}
so we can deduce that
\begin{equation}
\label{3.3}
\|v(t)\|_{L^2} \leq C\int_{0}^{t} \|u\|_{L^{\infty}}^2 (\|\partial_x v\|_{L^2} + \| w_0'\|_{L^2}) ds
\end{equation}
Also, a standard energy method yields (see \cite[Lemma 5.4]{20})
\begin{align*}
\|\partial_x w(t)\|_{H^{1}} \leq \|w_0'\|_{H^{1}}+ C \int_{0}^t \|u\|_{X^2}^2\left\|\partial_x w\right\|_{H^{1}}ds,
\end{align*}
which implies
\begin{equation}
\label{3.4}
\| v(t)\|_{H^{2}} \leq C\|w_0'\|_{H^{1}} + C\int_{0}^t \left(\|u\|_{X^2}^2\left\|v\right\|_{H^{2}}+\|u\|_{X^2}^2\left\|w_0'\right\|_{H^{1}}\right)ds,
\end{equation}
Combining (\ref{3.3}) and (\ref{3.4}), by Grönwall's inequality, we can deduce that
\begin{align*}
\sup _{|t| \leq T}\|v(t)\|_{H^{2}} \leq C \mathrm{e}^{C \int_{-T}^{T} \|u\|_{X^2}^2 d t}\left(\| w_0'\|_{H^{1}}+ \int_{-T}^T \|u\|_{X^2}^2 \|w_0'\|_{H^1} dt\right),
\end{align*}
which yields (\ref{3.02}).
\end{proof}
\noindent Now we can introduce the following local well-posedness result.
\begin{proposition}
\label{theorem 3.3}
For any $R > 0$, there is some $T=T(R) > 0$ such that, for every $u_{0} \in E$ with $\|u_0\|_{X^2}^2 \leq R$, there exists a unique solution $u \in C\left([-T, T] ; E\right)$ of (\ref{1.1}) with $u(0) = u_0$. Moreover, the flow map $u_{0} \in E \mapsto u \in C\left([-T, T]; E\right)$ is continuous.
\end{proposition}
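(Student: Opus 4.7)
I will prove Proposition \ref{theorem 3.3} by a Kato-type Picard iteration driven by Lemma \ref{lemma 3.2}. Set $u^0:=e^{it\partial_x^2}u_0\in C(\mathbb{R};E)$ (by Lemma \ref{lemma 2.4}) and, given $u^k\in C([-T,T];E)$, define $u^{k+1}$ as the unique solution supplied by Lemma \ref{lemma 3.2} (with $u:=u^k$ and $w_0:=u_0$) of
\begin{equation*}
\partial_t u^{k+1}=i\partial_x^2 u^{k+1}-4u^k\Pi\bigl(\Re(\bar{u}^k\partial_x u^{k+1})\bigr),\qquad u^{k+1}(0)=u_0.
\end{equation*}
Writing $u^{k+1}=e^{it\partial_x^2}u_0+v^{k+1}$ with $v^{k+1}(0)=0$ and $v^{k+1}\in C([-T,T];H^2)$, the bound \eqref{3.02} yields $\sup_{|t|\le T}\|v^{k+1}\|_{H^2}\le C e^{CM^2T}\|u_0'\|_{H^1}(1+M^2T)$ whenever $\sup_{|t|\le T}\|u^k\|_{X^2}\le M$. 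Combined with Lemmas \ref{lemma 2.2}, \ref{lemma 2.3} and Remark \ref{remark 2.5}, this lets me close an induction showing $\sup_{|t|\le T}\|u^k\|_{X^2}\le M$ and $\sup_{|t|\le T}\||u^k|^2-1\|_{L^2}\le M$ for every $k$, provided $M=M(R)$ is large enough and $T=T(R)>0$ is small enough.

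\textbf{Contraction and passage to the limit.} Set $w^{k+1}:=u^{k+1}-u^k=v^{k+1}-v^k\in C([-T,T];H^2)$, which satisfies $w^{k+1}(0)=0$ and $\sup_{|t|\le T}\|w^{k+1}\|_{H^2}\le 2M$. Subtracting the equations,
\begin{equation*}
\partial_t w^{k+1}=i\partial_x^2 w^{k+1}-4u^k\Pi\bigl(\Re(\bar{u}^k\partial_x w^{k+1})\bigr)-4w^k\Pi\bigl(\Re(\bar{u}^k\partial_x u^k)\bigr)-4u^{k-1}\Pi\bigl(\Re(\bar{w}^k\partial_x u^k)\bigr).
\end{equation*}
I will run an $L^2$ energy estimate: the $i\partial_x^2 w^{k+1}$ term contributes nothing to $\Re\langle\partial_t w^{k+1},w^{k+1}\rangle$; the two source terms are bounded by $CM^2\|w^k\|_{L^2}\|w^{k+1}\|_{L^2}$ via H\"older and the $L^p$-boundedness of $\Pi$ ($1<p<\infty$); and the principal quasilinear term is reduced to $CM^2\|w^{k+1}\|_{L^2}^2$ by the symmetrization described below. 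Grönwall with $w^{k+1}(0)=0$ then gives $\sup_{|t|\le T}\|w^{k+1}\|_{L^2}\le\eta(T)\sup_{|t|\le T}\|w^k\|_{L^2}$ with $\eta(T)\to 0$ as $T\to 0$, so, shrinking $T$ so that $\eta(T)\le 1/2$, the sequence $\{v^k\}$ is Cauchy in $C([-T,T];L^2)$. Interpolation with the uniform $H^2$ bound gives $v^k\to v$ in $C([-T,T];H^s)$ for every $s<2$, with $v\in L^\infty([-T,T];H^2)$ by weak-$\ast$ compactness. Passing to the limit in the equation, $u:=e^{it\partial_x^2}u_0+v$ solves \eqref{1.1} and the continuity $u\in C([-T,T];E)$ follows from the equation together with Lemmas \ref{lemma 2.2} and \ref{lemma 2.4}. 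Uniqueness and continuity of the flow map $u_0\mapsto u$ in $d_E$ follow from the same $L^2$ energy estimate applied to the difference of two solutions with close initial data, combined with \eqref{2.3}.

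\textbf{Main obstacle.} The technical heart of the proof is the $L^2$ energy estimate on $w^{k+1}$: the quasilinear term $-4u^k\Pi(\Re(\bar{u}^k\partial_x w^{k+1}))$ contains $\partial_x w^{k+1}$, which is not controlled in $L^2$ uniformly in $k$. Decomposing it as $N_1w^{k+1}+N_2(\overline{w^{k+1}})$ with the $\mathbb{C}$-linear operators $N_1w:=-2u^k\Pi(\bar{u}^k\partial_x w)$ and $N_2\tilde w:=-2u^k\Pi(u^k\partial_x\tilde w)$, the self-adjointness of $\Pi$ together with integration by parts produces the identity $N_1+N_1^{\ast}=2\partial_x u^k\cdot\Pi(\bar{u}^k\cdot)+2u^k\Pi(\partial_x\bar{u}^k\cdot)$, a zeroth-order operator with $L^2$-norm $\lesssim\|u^k\|_{X^2}^2$ (using $\partial_x u^k\in L^\infty$ from $X^2\hookrightarrow W^{1,\infty}$); an analogous symmetrization, exploiting the identity $\overline{\Pi f}=\bar f-\Pi(\bar f)$, handles the $N_2$ contribution to $\Re\langle\cdot,w^{k+1}\rangle$. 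This cancellation of $\partial_x w^{k+1}$ in the real-part inner product is exactly what allows the Grönwall argument at the $L^2$ level to close.
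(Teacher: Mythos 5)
Your architecture is the same as the paper's (the iteration of Lemma \ref{lemma 3.2}, the inductive $X^2$/$H^2$ bounds, an $L^2$ contraction, then passage to the limit), but the step you yourself call the technical heart is wrong, and it is exactly the step on which the whole scheme rests. Your symmetrization of $N_1$ is correct: $N_1+N_1^{\ast}$ is indeed zeroth order. The $N_2$ part, however, does not enter the energy identity as a sesquilinear form $\langle (N_2+N_2^{\ast})w,w\rangle$; it enters as $\Re\langle N_2\bar w, w\rangle$, a form which is quadratic in $w$ \emph{without} conjugation, and for such forms no symmetrization can remove the derivative. Carrying out your own suggestion (using $\Pi^{\ast}=\Pi$, $\overline{\Pi f}=\bar f-\Pi\bar f$, and integrating by parts) gives, with $\phi:=u^k\bar w$,
\begin{equation*}
\Re\langle N_2\bar w,\,w\rangle=-2\,\Re\int_{\mathbb{R}}\partial_x(\Pi\phi)\,\bigl((\mathrm{Id}-\Pi)\phi\bigr)\,dx+O\bigl(\|u^k\|_{X^2}^2\|w\|_{L^2}^2\bigr),
\end{equation*}
and the remaining integral pairs the positive- and negative-frequency parts of $\phi$ across a full derivative: it is of size $\|\phi\|_{\dot H^{1/2}}^2$, not $\|\phi\|_{L^2}^2$. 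The claimed bound $|\Re\langle\mathcal{N}(w),w\rangle|\le CM^2\|w\|_{L^2}^2$ is simply false: take $u^k\equiv 1\in E$ (a legitimate iterate, e.g.\ for $u_0\equiv 1$) and $w=e^{iNx}f-i\,e^{-iNx}\bar f$ with $\operatorname{supp}\hat f\subset[-1,1]$; then $\|w\|_{L^2}^2=2\|f\|_{L^2}^2$ while a direct computation gives $\Re\langle-4\Pi(\Re(\partial_x w)),w\rangle=N\|w\|_{L^2}^2+O(\|w\|_{L^2}^2)$, which exceeds $C\|w\|_{L^2}^2$ for every fixed $C$ once $N$ is large.

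The consequence is that your Grönwall argument does not close: the best available substitute is to interpolate the bad term against the uniform $H^2$ bound, $\|w^{k+1}\|_{\dot H^{1/2}}^2\lesssim\|w^{k+1}\|_{L^2}\|w^{k+1}\|_{H^1}\lesssim M\|w^{k+1}\|_{L^2}$, and this injects into the Grönwall inequality an inhomogeneous term independent of $\|w^k\|_{L^2}$, so you obtain $\delta_{k+1}\le\tfrac12\delta_k+C(M)T$ instead of $\delta_{k+1}\le\eta(T)\delta_k$, and the iterates are no longer Cauchy. The loss is structural, not a computational slip: for the \emph{nonlinear} equation the derivative falls on the real quantity $|u|^2$, which is why mass-type energy identities are harmless, but linearizing (as the iteration scheme does) creates the anti-linear term $u^k\Pi(u^k\partial_x\bar w)$, which couples the two dispersion branches $e^{\pm it\xi^2}$ and genuinely loses half a derivative. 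Recovering it needs an extra ingredient — the Kato local smoothing effect of $e^{it\partial_x^2}$, a gauge transformation à la Hayashi–Ozawa, or a normal form exploiting the non-resonance of the $\pm\xi^2$ branches; this is what the paper implicitly delegates to the energy estimates of de Moura--Pilod \cite{20} behind both \eqref{3.4} and its own (terse) ``standard energy method'' contraction claim. A secondary gap: your limit passage only yields $v\in L^{\infty}([-T,T];H^2)$ by weak-$\ast$ compactness, and strong $H^2$-continuity of $v$ (needed for $u\in C([-T,T];E)$ and for flow-map continuity in $d_E$) does not ``follow from the equation''; the paper invokes the Bona--Smith/frequency-envelope argument precisely for this.
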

\begin{proof}
We consider the iteration scheme
\begin{equation}
\partial_{t} u^{k+1}=i \partial_{x}^2 u^{k+1}-4 u^{k} \Pi(\Re(\bar{u}^k\partial_x u^{k+1}))
\end{equation}
with initial datum $u^{k+1}(0,x) = u_0(x) \in E$. Lemma \ref{lemma 3.2} allows us to construct by induction a sequence $u^k$ with $u^{0}(t, x)=\mathrm{e}^{\mathrm{i} t \partial_x^2} u_{0}(x)$.\\\\
We write
\begin{align*}
u^{k} = \mathrm{e}^{\mathrm{i} t \partial_x^2} u_0 + v^{k}.
\end{align*}
From Lemma \ref{lemma 3.2}, we have
\begin{equation}
\label{3.6}
\sup _{|t| \leqslant T}\|v^{k+1}(t)\|_{H^2} \leq C \mathrm{e}^{C \int_{-T}^{T} \|u^k\|_{X^2}^2 d t}\left(\left\| u_{0}'\right\|_{H^{1}}+\int_{-T}^T \|u^k\|_{X^2}^2 \|u_0'\|_{H^1} dt\right).
\end{equation}
Given $u_0 \in E$ such that $\|u_0\|_{X^2} \leqslant R$, by Lemma \ref{lemma 2.3}, for $T=T(R)$ with respect to $R$,
\begin{align*}
\left\|\mathrm{e}^{i t \partial_x^2} u_0\right\|_{X^2} \leqslant 2 R.
\end{align*}
Assume that $\sup _{|t| \leqslant T} \|u^k\|_{X^2} \leq R_1$, and the size of $R_1$ will be determined later. By (\ref{3.6}), we have
\begin{align*}
\sup _{|t| \leqslant T}\|v^{k+1}\|_{H^2}  \leq C_1 e^{2 C_1 T R_{1}^{2}}(R+ 2TR_1^2 R ) := R_2.
\end{align*}
By the triangle inequality, we know that
\begin{align*}
\sup _{|t| \leqslant T} \left\|u^{k+1}\right\|_{X^2} \leq 2R + C_2 R_2.
\end{align*}
Let $R_1 = (3+C_1 C_2)R$, then we can choose $T=T(R)$ such that
\begin{align*}
2R + C_2 R_2 \leq R_1.
\end{align*}
Then by an elementary induction argument, we find that $\sup _{|t| \leqslant T} \|u^k\|_{X^2} \leq R_1$ for all $k$ and $\sup _{|t| \leqslant T}\|v^{k}\|_{H^2} \leq R_2$ for all $k$.\\\\
Next, we show that we have a contraction property of the sequence $v_k$ in $L^2(\mathbb{R})$. Observe that
\begin{align*}
\begin{array}{l}\partial_{t}\left(u^{k+1}-u^{k}\right)=i \partial_{x}^2\left(u^{k+1}-u^{k}\right)-4 u^{k}\Pi\left(\Re(\bar{u}^{k}\partial_{x}(u^{k+1}-u^{k}))\right) \\ +4 u^{k-1}\Pi\left(\Re(\bar{u}^{k-1}\partial_{x} u^{k})\right) - 4 u^{k}\Pi\left(\Re(\bar{u}^{k}\partial_{x} u^{k})\right)
\end{array}.
\end{align*}
A standard energy method yields
\begin{align*}
\sup _{|t| \leq T}\left\|u^{k+1}(t)-u^{k}(t)\right\|_{L^{2}} \leq K T \sup _{|t| \leq T}\left\|u^{k}(t)-u^{k-1}(t)\right\|_{L^{2}}.
\end{align*}
So we take $T$ small enough to ensure that $KT<1$, and we then we can deduce that $\sup _{|t| \leq T(R)}\left\|u^{k+1}(t)-u^{k}(t)\right\|_{L^{2}} = \sup _{|t| \leq T(R)}\left\|v^{k+1}(t)-v^{k}(t)\right\|_{L^{2}}$ is geometrically convergent.\\\\
Finally, the sequence $v_k(t)$ is uniformly weakly convergent in $C\left([-T, T] ; H^{2}(\mathbb{R})\right)$ and strongly convergent in $C\left([-T, T] ; L^{2}(\mathbb{R})\right)$. We note the limit of $v_k(t)$ as $v(t)$. By adapting an argument due to Bona-Smith \cite{3}(or Tao's frequency envelope method \cite{4,28}), we can deduce that the limit $v(t)$ actually belongs to $C\left([-T, T] ; H^{2}(\mathbb{R})\right)$. Thus $u(t) = \mathrm{e}^{it \partial_{x}^{2}} u_{0}+v(t) \in C([-T, T] ; E)$ solves (\ref{1.1}).\\\\
Uniqueness and continuity of the flow map follow along the same lines as the contraction property in $L^2(\mathbb{R})$. The proof of Proposition \ref{theorem 3.3} is complete.
\end{proof}
\begin{remark}
\label{remark 3.4}
In fact, by adapting the same argument used in the proof of Proposition \ref{theorem 3.3}, we can also deduce the local well-posedness of (\ref{1.1}) in $\{u\in E: u^{(3)} \in L^2(\mathbb{R})\}$.
\end{remark}
\subsection{Conservation laws}
For $u \in E$, we recall the operators $L_u$ and $B_u$ acting on $L^2 (\mathbb{R})$,
\begin{align*}
L_{u}=D+ u T_{\bar{u}} \quad \text { and } \quad B_{u}=-u T_{\partial_{x} \bar{u}}+\partial_{x} u T_{\bar{u}}+i\left(u T_{\bar{u}}\right)^{2}.
\end{align*}
We infer the following Lax equation for (\ref{1.1}).
\begin{lemma}
(Lax Equation)
If $u \in C\left([0, T] ; E\right)$ solves (\ref{1.1}), then it holds 
\begin{equation}
\label{3.7}
\frac{d}{d t} L_{u}=\left[B_{u}, L_{u}\right].
\end{equation}
\end{lemma}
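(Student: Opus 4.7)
The plan is to establish the Lax equation as an operator identity on the dense subspace $H^1(\mathbb{R}) \subset L^2(\mathbb{R})$, on which $L_u = D + uT_{\bar u}$ is well defined for $u \in E$ (note that $\bar u \in L^\infty$ makes $T_{\bar u}:L^2\to L^2$ bounded, and then multiplication by $u \in L^\infty$ keeps things in $L^2$). The computation is a direct matching of terms. First, since $D$ is time-independent, differentiating $L_u$ gives
\begin{equation*}
\frac{d}{dt}L_u = (\partial_t u)\, T_{\bar u} + u\, T_{\partial_t \bar u}.
\end{equation*}
Next, I rewrite (\ref{1.1}) as $\partial_t u = i\partial_x^2 u - 2iu\,\Pi D(|u|^2)$ (and take conjugates for $\partial_t \bar u$), and substitute. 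The nonlinear term can also be rewritten via the chain rule $\partial_x|u|^2 = (\partial_x u)\bar u + u\,\partial_x \bar u$, which produces expressions of the form $u\,\Pi(\bar u\cdot\cdot\cdot) = u\,T_{\bar u}(\cdots)$ that will match the structure of $B_u$.

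The second step is to expand $[B_u, L_u]$ using the bilinearity of the commutator:
\begin{equation*}
[B_u, L_u] = [B_u, D] + [B_u, uT_{\bar u}],
\end{equation*}
and the three-term decomposition of $B_u = -uT_{\partial_x \bar u} + (\partial_x u)T_{\bar u} + i(uT_{\bar u})^2$. The key operator identities I will use are: (i) $[D, T_b]f = T_{Db}f$ (which follows since $\Pi$ commutes with $D$ and $D(bf) - bDf = (Db)f$); (ii) for $a \in L^\infty$ viewed as a multiplication operator, $[D, a] = \frac{1}{i}(\partial_x a)$; and (iii) the composition rule for Toeplitz operators with symbols one of which is antiholomorphic, namely $T_{\bar u} \circ (\text{mult by } u) = T_{\bar u} u T_{\bar u} + \text{correction involving } \Pi$. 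After expanding, the $[B_u, D]$ piece contributes exactly the second-derivative terms $i(\partial_x^2 u)T_{\bar u} - iu\,T_{\partial_x^2 \bar u}$ needed to match the linear part of $\partial_t L_u$, while the commutator $i[(uT_{\bar u})^2, uT_{\bar u}]$ vanishes trivially and the cross terms $[-uT_{\partial_x \bar u} + (\partial_x u)T_{\bar u}, uT_{\bar u}]$ plus $i[(uT_{\bar u})^2, D]$ are precisely designed to reproduce $-2iuN T_{\bar u} + 2i u T_{\bar u \bar N}$ with $N = \Pi D(|u|^2)$.

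The main obstacle will be the careful bookkeeping of Toeplitz compositions, because $T_a T_b \neq T_{ab}$ in general, and the correction terms (essentially Hankel contributions from $(\mathrm{Id} - \Pi)$) must be tracked through the computation. In particular, to handle $i[(uT_{\bar u})^2, D]$ one needs to write $(uT_{\bar u})^2 f = u T_{\bar u}(uT_{\bar u}f)$ and compute $[D, u T_{\bar u} u T_{\bar u}]$ by sliding $D$ through each factor, using $[D, u] = \frac{1}{i}\partial_x u$ as multiplication and $[D, T_{\bar u}] = T_{\partial_x \bar u / i}$; at the end, after collecting, one recovers precisely $2u T_{\bar u \cdot \Pi D(|u|^2)} - 2u \Pi D(|u|^2) T_{\bar u}$, matching the nonlinear piece of $\frac{d}{dt}L_u$. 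Since all manipulations are algebraic and the operators are bounded on $L^2$ (with domains respected because $\partial_x u, \partial_x \bar u \in L^2 \cap L^\infty_{\text{loc}}$ by Lemma \ref{lemma 2.1} and Sobolev embedding in one dimension), the resulting identity makes sense on $H^1$ and the Lax equation (\ref{3.7}) follows.
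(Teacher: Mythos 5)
Your strategy---verifying the Lax identity by direct operator expansion---is viable, and it is a genuinely different route from the paper, whose entire proof of this lemma is the citation of \cite[Lemma 2.3]{5}. Your skeleton is in fact correct: writing $M_a$ for multiplication by $a$, $P:=-M_uT_{\partial_x\bar u}+M_{\partial_xu}T_{\bar u}$ and $A:=M_uT_{\bar u}$, one has $\frac{d}{dt}L_u=M_{\partial_tu}T_{\bar u}+M_uT_{\partial_t\bar u}$, the commutator $[A^2,A]$ vanishes, $[P,D]=iM_{\partial_x^2u}T_{\bar u}-iM_uT_{\partial_x^2\bar u}$ matches the linear part of $\frac{d}{dt}L_u$, and the nonlinear part must come from $[P,A]+i[A^2,D]$. (Your phrasing is internally inconsistent on this point: $i[A^2,D]$ is a piece of $[B_u,D]$, so $[B_u,D]$ does not contribute ``exactly'' the second-derivative terms.) The genuine gap is that the proposal stops exactly where the proof begins: the decisive identity, that $[P,A]+i[A^2,D]$ equals $-2iM_{uN}T_{\bar u}+2iM_uT_{\bar u\bar N}$ with $N=\Pi D(|u|^2)$, is asserted (``precisely designed to reproduce'', ``one recovers precisely'') rather than derived. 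For a Lax-pair lemma this matching \emph{is} the statement to be proved; claiming that it holds is not a proof of it.

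The gap is not merely one of detail, because the one concrete tool you propose for the Toeplitz bookkeeping, your identity (iii), is wrong as stated: there is no correction term, one has exactly $T_{\bar u}\circ M_u=T_{|u|^2}$, since $\Pi(\bar u\,uf)=\Pi(|u|^2f)$. Using this (and likewise $T_{\partial_x\bar u}M_u=T_{u\partial_x\bar u}$, $T_{\bar u}M_{\partial_xu}=T_{\bar u\partial_xu}$), the expansion collapses to $[P,A]+i[A^2,D]=-2M_uT_gT_{\bar u}$ with $g:=\partial_x(|u|^2)$, and the facts actually needed to finish are nowhere in your proposal: (a) a product of two functions with Fourier support in $[0,+\infty)$ again has Fourier support in $[0,+\infty)$, hence $T_{\Pi g}T_{\bar u}=M_{\Pi g}T_{\bar u}$; (b) $\Pi$ annihilates a product of two functions with Fourier support in $(-\infty,0]$, hence $T_{(\mathrm{Id}-\Pi)g}T_{\bar u}=T_{\bar u(\mathrm{Id}-\Pi)g}$; (c) $g$ is real-valued, so $\Pi g=iN$ and $(\mathrm{Id}-\Pi)g=\overline{\Pi g}=-i\bar N$. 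Only with (a)--(c) does $-2M_uT_gT_{\bar u}$ become $-2iM_{uN}T_{\bar u}+2iM_uT_{\bar u\bar N}$, i.e.\ the nonlinear part of $\frac{d}{dt}L_u$; these Hardy-space mechanisms, not generic ``Hankel corrections,'' are the crux, and omitting them while flagging the bookkeeping as ``the main obstacle'' leaves the proof unfinished. Two smaller inaccuracies: the boundedness of $B_u$ requires $\partial_xu\in L^\infty(\mathbb{R})$, which follows from $u',u''\in L^2(\mathbb{R})$ (the definition of $E$) and one-dimensional Sobolev embedding, not from Lemma \ref{lemma 2.1} (which bounds $\|u\|_{L^\infty}$) and not from $L^\infty_{\mathrm{loc}}$ information; and you should specify in what sense the operator family $t\mapsto L_{u(t)}$ is differentiated (e.g.\ as bounded operators from $H^1(\mathbb{R})$ to $L^2(\mathbb{R})$), since $u\in C([0,T];E)$ only gives continuity of the coefficients.
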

\begin{proof}
We refer to \cite[Lemma 2.3]{5} for the proof.
\end{proof}
\noindent As a consequence of the Lax equation, we obtain a hierarchy of conservation laws of (\ref{1.1}), and this implies the a-priori bound on the solution.
\begin{lemma}
\label{lemma 3.5}
Let $u_0 \in E$ and $u \in C\left(I, E\right)$ be the correspoding solution of (\ref{1.1}) with $u(0,x)= u_0(x)$, where $I \subset \mathbb{R}$ denotes the maximal time interval of existence of the solution. Then the quantities
\begin{align*}
I_1(u): = \left\|D u+u \Pi\left(|u|^{2}-1\right)\right\|_{L^{2}}
\end{align*}
and
\begin{align*}
I_2 (u)  : =  \left\|D^2 u  + u T_{\bar{u}} D\left(u \right) + D(u\Pi((|u|^2 -1))) + D u + u\Pi(|u|^2\Pi(|u|^2-1)) + u\Pi(|u|^2-1) \right\|_{L^2}
\end{align*}
are conserved. \\\\
Also, we have the following a-priori bound on the solution $u$,
\begin{equation}
\label{3.080}
\sup _{t \in I}\left\||u(t)|^2-1\right\|_{L^2} \leq C\left(u_0\right),
\end{equation}
\begin{equation}
\label{3.08}
\sup _{t \in I}\|u\|_{X^2} \leq C(u_0).
\end{equation}
\end{lemma}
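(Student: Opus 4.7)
The approach is to exploit the Lax equation \eqref{3.7} from the previous lemma. Formally, \eqref{3.7} implies that $L_{u(t)}$ evolves by unitary conjugation, so $L^2$-norms of polynomial expressions in $L_u$ applied to appropriately evolving reference vectors are conserved. The quantities $I_1(u)$ and $I_2(u)$ are the renormalized analogues of $\|L_u u\|_{L^2}$ and $\|L_u^2 u\|_{L^2}$: the formally divergent pieces coming from $\Pi(|u|^2) = \Pi(|u|^2-1)+\Pi(1)$ (with $\Pi(1)$ heuristically the constant $\tfrac12$) are subtracted off, leaving the honest $L^2$ expressions that appear in the statement.

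My plan is to first prove conservation by directly computing $\frac{d}{dt}\|v_j(t)\|_{L^2}^2$ where $v_1 := Du + u\Pi(|u|^2-1)$ and $v_2$ is the argument defining $I_2$. Substituting $\partial_t u$ from \eqref{1.1}, integrating by parts, and then using the Lax equation as an algebraic identity to reorganize commutators, I expect the time derivative to collapse to zero. Because $u \in E$ has only limited regularity, I would first carry out the computation classically on the dense subspace $\{u\in E:u'''\in L^2\}$ granted by Remark \ref{remark 3.4}, where all manipulations are justified pointwise, and then extend to arbitrary $u_0\in E$ by the continuity of the flow in Proposition \ref{theorem 3.3} together with the continuity of $u\mapsto I_j(u)$ on $E$.

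For the a priori bounds, I expand
\begin{equation*}
I_1(u)^2 = \|u'\|_{L^2}^2 + 2\,\mathrm{Re}\langle Du,\,u\Pi(|u|^2-1)\rangle + \|u\Pi(|u|^2-1)\|_{L^2}^2.
\end{equation*}
Since $|u|^2-1$ is real-valued, Plancherel yields $\|\Pi(|u|^2-1)\|_{L^2}^2 = \tfrac12\||u|^2-1\|_{L^2}^2$. Estimating the cross term via integration by parts and Cauchy--Schwarz produces a coercive inequality of the shape $\|u'\|_{L^2}^2 + c\||u|^2-1\|_{L^2}^2 \lesssim I_1(u)^2 + Q(\|u\|_{L^\infty},\|u'\|_{L^2},\||u|^2-1\|_{L^2})$ with $Q$ sub-dominant. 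Combined with Lemma \ref{lemma 2.1}, which controls $\|u\|_{L^\infty}$ via a sub-critical interpolation in $\|u'\|_{L^2}$ and $\||u|^2-1\|_{L^2}$, a standard bootstrap yields \eqref{3.080} together with a bound on $\|u'\|_{L^2}$. The identical strategy applied to $I_2(u)^2$, whose leading term is $\|u''\|_{L^2}^2$, furnishes control on $\|u''\|_{L^2}$ and thereby \eqref{3.08}.

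The principal obstacle is the closure of the a priori estimate rather than the conservation itself: the cross and nonlinear pieces in $I_j(u)^2$ involve $\|u\|_{L^\infty}$, which is exactly the quantity we are trying to control. The decisive tool that prevents circularity is the sub-critical interpolation in Lemma \ref{lemma 2.1}, which lets $\|u\|_{L^\infty}$ be absorbed with a small coefficient once $\|u'\|_{L^2}$ and $\||u|^2-1\|_{L^2}$ are controlled, making the bootstrap close on arbitrary bounded time intervals.
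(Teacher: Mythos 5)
Your conservation argument is broadly in the spirit of the paper: the paper also works with a regularization and passes to the limit (it multiplies $u$ by the cutoff $\chi_\varepsilon(x)=(1-i\varepsilon x)^{-1}$ so that the Lax-pair quantities make sense despite $u\notin L^2$, and uses the algebraic identity $\langle L_u^2(u\chi_\varepsilon),u\chi_\varepsilon\rangle-2\langle L_u(u\chi_\varepsilon),u\chi_\varepsilon\rangle+\|u\chi_\varepsilon\|_{L^2}^2=\|D(u\chi_\varepsilon)+u\Pi((|u|^2-1)\chi_\varepsilon)\|_{L^2}^2$; the extra regularity $u_0'''\in L^2$ plus continuity of the flow is invoked only for $I_2$). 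So that half of your plan, though only a sketch, is aligned with the paper's strategy.

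The genuine gap is in your derivation of the a priori bound \eqref{3.080}. First, a slip: the quadratic term in the expansion of $I_1(u)^2$ is $\|u\Pi(|u|^2-1)\|_{L^2}^2$, not $\|\Pi(|u|^2-1)\|_{L^2}^2$, and since $u$ may vanish there is no pointwise way to convert it into $c\||u|^2-1\|_{L^2}^2$. Second, and decisively, the cross term $2\,\mathrm{Re}\langle Du,\,u\Pi(|u|^2-1)\rangle$ is not sub-dominant: Cauchy--Schwarz bounds it by $\|u'\|_{L^2}\|u\|_{L^\infty}\||u|^2-1\|_{L^2}$, and after inserting Lemma~\ref{lemma 2.1} this is \emph{cubic} in the quantities $\|u'\|_{L^2}$, $\||u|^2-1\|_{L^2}$ you are trying to control, so it cannot be absorbed by the quadratic main terms for large data; a bootstrap of this shape closes only under a smallness hypothesis that the lemma does not make. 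Worse, this cross term contains the momentum-type piece $\mathrm{Im}\int \bar u\, u'\,(|u|^2-1)\,dx$, which is controlled by no quantity at hand. Note also that your estimates are sign-blind (they would apply equally to the focusing equation), whereas the unconditional coercivity of $I_1$ is special to the defocusing case. The paper's key device, missing from your proposal, is the gauge transform $v=u\,\mathrm{e}^{\frac{i}{2}\int_0^x(|u(y)|^2-1)dy}$: it turns $I_1(u)$ into $\|\partial_x v-\tfrac12 v\mathrm{H}(|v|^2-1)\|_{L^2}$, eliminating exactly the problematic momentum term; then exact Hilbert-transform identities (from [5, Appendix C], e.g. $\int f(\mathrm{H}f)^2=\tfrac13\int f^3$ for real $f$) give $I_1^2=\|\partial_x v\|_{L^2}^2+\tfrac12\||v|^2-1\|_{\dot H^{1/2}}^2+\tfrac1{12}\int(|v|^2-1)^3+\tfrac14\||v|^2-1\|_{L^2}^2$, and the cubic term is handled by the pointwise inequality $|v|^2-1\geq-1$ (this is where the defocusing sign enters), yielding $I_1^2\geq\tfrac16\||u|^2-1\|_{L^2}^2$ unconditionally, with no bootstrap at all. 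Once \eqref{3.080} is in hand, your route to $\|u'\|_{L^2}$, $\|u\|_{L^\infty}$, and then $\|u''\|_{L^2}$ via conservation of $I_2$ does match the paper's.
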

\begin{proof}
We observe that (\ref{1.1}) can be written as 
\begin{equation}
\label{3.8}
\partial_{t} u=-iD^2u -2i u \Pi D (|u|^2)
\end{equation}
Let
\begin{equation}
\chi_{\varepsilon}(x):=\frac{1}{1-i \varepsilon x},
\end{equation}
then we have
\begin{align*}
\partial_t (u \chi_{\varepsilon}) = \chi_{\varepsilon} \partial_t u,
\end{align*}
\begin{align*}
-iD^2(u\chi_{\varepsilon}) = i\partial_x^2 (u \chi_{\varepsilon}) = i \chi_{\varepsilon} \partial_x^2 u + 2i \partial_x u \partial_x \chi_{\varepsilon} + iu \partial_x^2\chi_{\varepsilon}
\end{align*}
and
\begin{align*}
-2i u \Pi D (|u|^2 \chi_{\varepsilon}) = -2i u \Pi(D(|u|^2) \chi_{\varepsilon})-2iu \Pi(|u|^2  D(\chi_{\varepsilon})).
\end{align*}
So we have
\begin{align*}
r_{\varepsilon} & : =\partial_t(u\chi_{\varepsilon}) - B_u(u\chi_{\varepsilon})+i(L_u)^2 (u\chi_{\varepsilon}) \\ & = \partial_t(u\chi_{\varepsilon})+iD^2(u\chi_{\varepsilon})+ 2it u  \Pi D(|u|^2 \chi_{\varepsilon}) \\ & =  -2i \partial_x u \partial_x \chi_{\varepsilon}  -iu \partial_x^2\chi_{\varepsilon} +2i u \Pi(|u|^2  D(\chi_{\varepsilon}))+2i u \Pi(D(|u|^2) \chi_{\varepsilon})-2i\chi_{\varepsilon} u \Pi(D(|u|^2)).
\end{align*}
We can easily observe that 
\begin{align*}
\|r_{\varepsilon} \|_{H^1} \underset{\varepsilon \rightarrow 0}{\longrightarrow}  0.
\end{align*}
Then we have
\begin{align*}
\frac{d}{dt}\left\langle  u \chi_{\varepsilon}, u \chi_{\varepsilon}\right\rangle &  = 2 \Re\left\langle \partial_t u \chi_{\varepsilon }, u\chi_{\varepsilon } \right\rangle \\ & = 2\Re\left\langle (B_u-iL_u^2)(u \chi_{\varepsilon}), u\chi_{\varepsilon} \right\rangle + 2\Re\left\langle r_{\varepsilon}, u \chi_{\varepsilon}\right\rangle \\ & = 2\Re\left\langle r_{\varepsilon}, u \chi_{\varepsilon}\right\rangle.
\end{align*}
Also, by the Lax pair equation (\ref{3.7}) and (\ref{3.8}), we have
\begin{align*}
& \quad \frac{d}{dt}\left\langle  L_u (u \chi_{\varepsilon}), u \chi_{\varepsilon}\right\rangle \\ & = \left\langle  L_u ( u \chi_{\varepsilon}), \partial_t (u \chi_{\varepsilon})\right\rangle + \left\langle  L_u (\partial_t (u \chi_{\varepsilon})), u \chi_{\varepsilon}\right\rangle+ \left\langle  \partial_t (L_u) (u \chi_{\varepsilon}), u \chi_{\varepsilon}\right\rangle \\& = \left\langle  L_u ( u \chi_{\varepsilon}), (B_u-iL_u^2) (u \chi_{\varepsilon})\right\rangle +  \left\langle  L_u(B_u-iL_u^2) ( u \chi_{\varepsilon}),  u \chi_{\varepsilon}\right\rangle \\ & +\left\langle  (B_u L_u-L_uB_u) ( u \chi_{\varepsilon}),  u \chi_{\varepsilon}\right\rangle + 2 \Re\left\langle L_{u}\left(u \chi_{\varepsilon}\right), r_{\varepsilon}\right\rangle\\& = 2 \Re\left\langle L_{u}\left(u \chi_{\varepsilon}\right), r_{\varepsilon}\right\rangle \\ & = 2 \Re\left\langle D\left(u \chi_{\varepsilon}\right) + u\Pi(|u|^2\chi_{\varepsilon}), r_{\varepsilon}\right\rangle \\ & = 2 \Re\left\langle D\left(u \chi_{\varepsilon}\right) + u\Pi\left((|u|^2-1)\chi_{\varepsilon}\right), r_{\varepsilon}\right\rangle + 2 \Re\left\langle u\chi_\varepsilon, r_\varepsilon\right\rangle,
\end{align*}
where
\begin{align*}
2 \Re\left\langle D\left(u \chi_{\varepsilon}\right) + u\Pi\left((|u|^2-1)\chi_{\varepsilon}\right), r_{\varepsilon}\right\rangle \underset{\varepsilon \rightarrow 0}{\longrightarrow}   0.
\end{align*}
Similarly, we have
\begin{align*}
& \quad \frac{d}{dt}\left\langle  L_u^2 (u \chi_{\varepsilon}), u \chi_{\varepsilon}\right\rangle \\ & = \left\langle  L_u^2 ( u \chi_{\varepsilon}), \partial_t (u \chi_{\varepsilon})\right\rangle + \left\langle  L_u^2 (\partial_t (u \chi_{\varepsilon})), u \chi_{\varepsilon}\right\rangle+ \left\langle  \partial_t (L_u^2) (u \chi_{\varepsilon}), u \chi_{\varepsilon}\right\rangle \\& = \left\langle  L_u^2 ( u \chi_{\varepsilon}), (B_u-iL_u^2) (u \chi_{\varepsilon})\right\rangle +  \left\langle  L_u^2(B_u-iL_u^2) ( u \chi_{\varepsilon}),  u \chi_{\varepsilon}\right\rangle \\ & +\left\langle  (B_u L_u^2-L_u^2 B_u) ( u \chi_{\varepsilon}),  u \chi_{\varepsilon}\right\rangle + 2 \Re\left\langle L_{u}^2\left(u \chi_{\varepsilon}\right), r_{\varepsilon}\right\rangle\\& = 2 \Re\left\langle L_{u}\left(u \chi_{\varepsilon}\right), L_u (r_{\varepsilon})\right\rangle \\ & = 2 \Re\left\langle D\left(u \chi_{\varepsilon}\right), L_u (r_{\varepsilon})\right\rangle+2 \Re\left\langle u \Pi\left(|u|^2 \chi_{\varepsilon}\right), L_u (r_{\varepsilon})\right\rangle\\ & = 2 \Re\left\langle D\left(u \chi_{\varepsilon}\right), L_u (r_{\varepsilon})\right\rangle+2 \Re\left\langle u \Pi\left((|u|^2-1) \chi_{\varepsilon}\right), L_u (r_{\varepsilon})\right\rangle+ 2 \Re\left\langle L_u (u  \chi_{\varepsilon}), r_{\varepsilon}\right\rangle,
\end{align*}
where 
\begin{align*}
2 \Re\left\langle D\left(u \chi_{\varepsilon}\right), L_u (r_{\varepsilon})\right\rangle+2 \Re\left\langle u \Pi\left((|u|^2-1) \chi_{\varepsilon}\right), L_u (r_{\varepsilon})\right\rangle \underset{\varepsilon \rightarrow 0}{\longrightarrow}  0 .
\end{align*}
Thus we infer that
\begin{equation}
\label{4.5}
\frac{d}{dt} \left(\left\langle L_u^2(u \chi_{\varepsilon}), u\chi_{\varepsilon} \right\rangle - 2 \left\langle L_u(u \chi_{\varepsilon}), u\chi_{\varepsilon} \right\rangle + \left\langle u \chi_{\varepsilon}, u \chi_{\varepsilon}\right\rangle\right) \underset{\varepsilon \rightarrow 0}{\longrightarrow}  0 .
\end{equation}
In fact, 
\begin{align*}
&\left\langle L_u^2(u \chi_{\varepsilon}), u\chi_{\varepsilon} \right\rangle - 2 \left\langle L_u(u \chi_{\varepsilon}), u\chi_{\varepsilon} \right\rangle + \left\langle u \chi_{\varepsilon}, u \chi_{\varepsilon}\right\rangle\\ = & \,\left\langle L_u(u \chi_{\varepsilon}), L_u (u\chi_{\varepsilon} )\right\rangle - 2 \left\langle L_u(u \chi_{\varepsilon}), u\chi_{\varepsilon} \right\rangle + \left\langle u \chi_{\varepsilon}, u \chi_{\varepsilon}\right\rangle \\ = & \,\left\langle (D+uT_{\bar{u}})(u \chi_{\varepsilon}), (D+uT_{\bar{u}})(u\chi_{\varepsilon} )\right\rangle - 2 \left\langle (D+uT_{\bar{u}})(u \chi_{\varepsilon}), u\chi_{\varepsilon} \right\rangle + \left\langle u \chi_{\varepsilon}, u \chi_{\varepsilon}\right\rangle \\ = & \,\left\langle D(u\chi_{\varepsilon})+u\Pi\left((|u|^2-1) \chi_{\varepsilon}\right)+u\chi_{\varepsilon}, D(u\chi_{\varepsilon})+u\Pi\left((|u|^2-1) \chi_{\varepsilon}\right)+u\chi_{\varepsilon}\right\rangle \\ - &\, 2 \left\langle D(u\chi_{\varepsilon})+u\Pi\left((|u|^2-1) \chi_{\varepsilon}\right)+u\chi_{\varepsilon}, u\chi_{\varepsilon} \right\rangle + \left\langle u \chi_{\varepsilon}, u \chi_{\varepsilon}\right\rangle  \\  = & \, \|D(u\chi_{\varepsilon}) + u\Pi\left((|u|^2-1)\chi_{\varepsilon}\right)\|_{L^2}^2.
\end{align*}
Then by (\ref{4.5}), we can deduce that
\begin{align*}
\|D(u(t)\chi_{\varepsilon}) + u(t)\Pi\left((|u(t)|^2-1)\chi_{\varepsilon}\right)\|_{L^2}^2 - \|D(u_0\chi_{\varepsilon}) + u_0\Pi\left((|u_0|^2-1)\chi_{\varepsilon}\right)\|_{L^2}^2 \underset{\varepsilon \rightarrow 0}{\longrightarrow}  0,
\end{align*}
and thus
\begin{align*}
I_1(u): = \|Du(t) + u(t)\Pi\left(|u(t)|^2-1\right)\|_{L^2} = \|Du_0 + u_0\Pi\left(|u_0|^2-1\right)\|_{L^2} = I_1(u_0),
\end{align*}
which yields the first conservation law in Lemma \ref{lemma 3.5}.\\\\
Following the modified gauge transform
\begin{align*}
v := u(x) \mathrm{e}^{\frac{i}{2} \int_{0}^x (|u(y)|^2 -1)d y},
\end{align*}
we know that
\begin{align*}
I_1(u)  = \|Du + u\Pi\left(|u|^2-1\right)\|_{L^2} = \|\partial_x v - \frac{1}{2} v \mathrm{H}\left(|v|^2-1\right)\|_{L^2}.
\end{align*}
Here we use $\Pi = \frac{1}{2}(\operatorname{Id} + i\mathrm{H})$, where $\mathrm{H}$ denotes the Hilbert transform.\\\\
From \cite[Appendix C]{5} we infer
\begin{align*}
& \quad \Re\left\langle\partial_x v, \mathrm{H}\left(|v|^2-1\right) v\right\rangle \\ & =\left\langle\Re\left[\bar{v} \partial_x v\right], \mathrm{H}\left(|v|^2-1\right)\right\rangle \\ & = \frac{1}{2}\left\langle\partial_x\left(|v|^2-1\right), \mathrm{H}\left(|v|^2-1\right)\right\rangle \\ & = - \frac{1}{2}\left\langle | v|^2 -1, \mathrm{H} \partial_x \left(|v|^2-1\right)\right\rangle \\ & =-\frac{1}{2}\left\langle | v|^2 -1, |D|\left(|v|^2-1\right)\right\rangle
\end{align*}
and 
\begin{align*}
& \quad \left\langle\mathrm{H}\left(|v|^2-1\right) v, \mathrm{H}\left(|v|^2-1\right) v\right\rangle \\ & = \left\langle |v|^2, \left(\mathrm{H}\left(|v|^2-1\right)\right)^2\right\rangle \\ & = \left\langle |v|^2-1, \left(\mathrm{H}\left(|v|^2-1\right)\right)^2\right\rangle + \left\langle 1, \left(\mathrm{H}\left(|v|^2-1\right)\right)^2\right\rangle  \\ & = \left\langle |v|^2-1, \left(\mathrm{H}\left(|v|^2-1\right)\right)^2\right\rangle + \left\langle 1, \left(\mathrm{H}\left(|v|^2-1\right)\right)^2\right\rangle \\ & = \frac{1}{3} \int_{\mathbb{R}} (|v|^2 -1)^3 dx + \||v|^2-1\|_{L^2}^2,
\end{align*}
so we have
\begin{align*}
&\quad \|\partial_x v - \frac{1}{2} v \mathrm{H}\left(|v|^2-1\right)\|_{L^2}^2 \\ & = \|\partial_x v\|_{L^2}^2 + \frac{1}{2}\| |v|^2-1\|_{\dot{H}^{\frac{1}{2}
}}^2 +\frac{1}{12} \int_{\mathbb{R}} (|v|^2-1)^3 dx + \frac{1}{4}\||v|^2-1\|_{L^2}^2 \\ & \geq \frac{1}{12} \int_{|v|\geq 1} (|v|^2-1)^3 dx + \frac{1}{12} \int_{|v|<1} (|v|^2-1)^3 dx + \frac{1}{4}\||v|^2-1\|_{L^2}^2 \\ & \geq \frac{1}{4} \int_{|v|\geq 1} (|v|^2-1)^2 dx + \frac{1}{6} \int_{|v|<1} (|v|^2-1)^2 dx\\ & \geq \frac{1}{6} 
\||v|^2-1\|_{L^2}^2,
\end{align*}
Thus
\begin{align*}
I_1(u_0)^2 = I_1(u)^2  = \|\partial_x v - \frac{1}{2} v \mathrm{H}\left(|v|^2-1\right)\|_{L^2}^2 \geq \frac{1}{6}\||v|^2-1\|_{L^2}^2 = \frac{1}{6}\||u|^2-1\|_{L^2}^2.
\end{align*}
This implies $\sup _{t \in I}\left\||u(t)|^2 -1 \right\|_{L^{2}} \leq C\left(u_{0}\right)$, which is
\eqref{3.080}.\\\\
According to Lemma \ref{lemma 2.1}, for every $\frac{1}{2} < s <1$, we have
\begin{align*}
& \quad \|Du + u\Pi\left(|u|^2-1\right)\|_{L^2}\\  & \geq \left\|\partial_{x} u\right\|_{L^{2}}^{2} - 2\left\|\partial_{x}u\right\|\|u\|_{L^{\infty}} \|\Pi\left(|u|^2-1\right)\|_{L^2} \\ & \geq \left\|\partial_{x} u\right\|_{L^{2}}^{2} -6 \left\|\partial_{x} u\right\|_{L^{2}}\|\Pi\left(|u|^2-1\right)\|_{L^2}\\& -C\left\|\partial_{x} u\right\|_{L^{2}}\left(\left\||u|^{2}-1\right\|_{L^{2}}^{2}+\left\||u|^{2}-1\right\|_{L^{2}}^{2-2 s}\left\|\partial_x u\right\|_{L^{2}}^{2 s}\right)^{1 / 2} \|\Pi\left(|u|^2-1\right)\|_{L^2}.
\end{align*}
From the a-priori bound on $\left\||u|^{2}-1\right\|_{L^{2}}$, we readily infer that  $\sup _{t \in I}\left\|\partial_{x} u(t)\right\|_{L^{2}} \leq C\left(u_{0}\right)$. Again by Lemma \ref{lemma 2.1}, we can deduce that  $\sup _{t \in I}\left\|u(t)\right\|_{L^{\infty}} \leq C\left(u_{0}\right)$.\\\\
Now we assume moreover that $u_0^{(3)} \in L^2 (\mathbb{R})$. By Remark \ref{remark 3.4}, we know that $\partial_x^3 u(t) \in L^2 (\mathbb{R})$, and then we can easily observe that 
\begin{align*}
\left\|r_{\varepsilon}\right\|_{H^{2}} \underset{\varepsilon \rightarrow 0}{\longrightarrow} 0
\end{align*}
in this case. \\\\
Now we have
\begin{align*}
& \quad \frac{d}{dt}\left\langle  L_u^4 (u \chi_{\varepsilon}), u \chi_{\varepsilon}\right\rangle \\ & = \left\langle  L_u^4 ( u \chi_{\varepsilon}), (\partial_t u \chi_{\varepsilon})\right\rangle + \left\langle  L_u^4 (\partial_t (u \chi_{\varepsilon})), u \chi_{\varepsilon}\right\rangle+ \left\langle  \partial_t (L_u^4) (u \chi_{\varepsilon}), u \chi_{\varepsilon}\right\rangle \\& = \left\langle  L_u^4 ( u \chi_{\varepsilon}), (B_u-iL_u^2) (u \chi_{\varepsilon})\right\rangle +  \left\langle  L_u^4(B_u-iL_u^2) ( u \chi_{\varepsilon}),  u \chi_{\varepsilon}\right\rangle \\ & +\left\langle  (B_u L_u^4-L_u^4 B_u) ( u \chi_{\varepsilon}),  u \chi_{\varepsilon}\right\rangle + 2 \Re\left\langle L_{u}^4\left(u \chi_{\varepsilon}\right), r_{\varepsilon}\right\rangle\\& = 2 \Re\left\langle L_{u}^2\left(u \chi_{\varepsilon}\right), L_u^2 (r_{\varepsilon})\right\rangle \\ & = 2 \Re\left\langle D^2\left(u \chi_{\varepsilon}\right), L_u (r_{\varepsilon})\right\rangle+2 \Re\left\langle u T_{\bar{u}} D\left( u\chi_{\varepsilon}\right), L_u^2 (r_{\varepsilon})\right\rangle + 2 \Re\left\langle D \left(uT_{\bar{u}}\left( u \chi_{\varepsilon}\right)\right), L_u^2  (r_{\varepsilon})\right\rangle\\ & + 2 \Re \left\langle u\Pi\left(|u|^2\Pi\left(|u|^2 -1)\chi_{\varepsilon}\right)\right), L_u^2  (r_{\varepsilon}) \right\rangle + 2 \Re \left\langle u\Pi((|u|^2-1) \chi_{\varepsilon}), L_u^2  (r_{\varepsilon})\right\rangle \\ & + 2 \Re\left\langle L_{u}\left(u \chi_{\varepsilon}\right), L_{u}\left(r_{\varepsilon}\right)\right\rangle,
\end{align*}
where
\begin{align*}
& 2 \Re\left\langle D^2\left(u \chi_{\varepsilon}\right), L_u (r_{\varepsilon})\right\rangle+2 \Re\left\langle u T_{\bar{u}} D\left( \chi_{\varepsilon}\right), L_u^2 (r_{\varepsilon})\right\rangle + 2 \Re\left\langle D \left(uT_{\bar{u}}\left( u \chi_{\varepsilon}\right)\right), L_u^2  (r_{\varepsilon})\right\rangle\\ & + 2 \Re \left\langle u\Pi\left(|u|^2\Pi\left(|u|^2 -1)\chi_{\varepsilon}\right)\right), L_u^2  (r_{\varepsilon}) \right\rangle + 2 \Re \left\langle u\Pi((|u|^2-1) \chi_{\varepsilon}), L_u^2  (r_{\varepsilon})\right\rangle \underset{\varepsilon \rightarrow 0}{\longrightarrow} 0. 
\end{align*}
Then we infer that
\begin{align*}
\frac{d}{d t}\left(\left\langle L_{u}^{4}\left(u \chi_{\varepsilon}\right), u \chi_{\varepsilon}\right\rangle- 2\left\langle L_{u}^{2}\left(u \chi_{\varepsilon}\right), u \chi_{\varepsilon}\right\rangle  + \left\langle u \chi_{\varepsilon}, u \chi_{\varepsilon}\right\rangle\right) \underset{\varepsilon \rightarrow 0}{\longrightarrow} 0.
\end{align*}
We also observe that
\begin{align*}
& \quad \left\langle L_{u}^{4}\left(u \chi_{\varepsilon}\right), u \chi_{\varepsilon}\right\rangle- 2\left\langle L_{u}^{2}\left(u \chi_{\varepsilon}\right), u \chi_{\varepsilon}\right\rangle  + \left\langle u \chi_{\varepsilon}, u \chi_{\varepsilon}\right\rangle  \\ & = \|D^2(u\chi_{\varepsilon} ) + u T_{\bar{u}} D\left(u \chi_{\varepsilon}\right) + D(u\Pi((|u|^2 -1)\chi_{\varepsilon})) + D(u\chi_{\varepsilon}) + u\Pi(|u|^2\Pi((|u|^2-1)\chi_{\varepsilon})) \\ & + u\Pi((|u|^2-1)\chi_{\varepsilon}) \|_{L^2}^2 \\ & : = I_{2,\varepsilon}(u)^2.
\end{align*}
Then we can deduce that
\begin{align*}
I_{2,\varepsilon}(u(t))^2 - I_{2,\varepsilon}(u_0)^2 \underset{\varepsilon \rightarrow 0}{\longrightarrow} 0,
\end{align*}
which implies that
\begin{equation}
\label{4.6}
I_2(u(t)) = I_2(u_0).
\end{equation}
By the density and the continuity of the flow map, we can deduce (\ref{4.6}) for $u_0 \in E$, which yields the second conservation law in Lemma \ref{lemma 3.5}.\\\\
Then from the a-priori bounds on $\|u(t)\|_{L^{\infty}}$, $\|\partial_x u(t)\|_{L^2}$ and $\||u|^2 -1 \|_{L^2}$, we can infer that $\sup _{t \in I}\left\|\partial_{x}^2 u(t)\right\|_{L^{2}} \leq C\left(u_{0}\right)$, thus we have (\ref{3.08}).\\\\
The proof is complete.
\end{proof}
\subsection{Global well-posedness}
As an application of Lemma \ref{lemma 3.5}, we deduce the following global well-posedness result. 
\begin{theorem}
Given $u_0 \in E$, there exists a unique solution $u \in C\left(\mathbb{R}; E\right)$ of (\ref{1.1}) with initial data $u(0) = u_0$, and this global solution satisfies $\sup_{t \in \mathbb{R}} \|u(t)\|_{X^2} < +\infty$ and $\sup_{t \in \mathbb{R}} \||u(t)|^2-1\|_{L^2} < +\infty$. Furthermore, for every $T>0$, the flow map $u_{0} \in E \mapsto u \in C\left([-T, T]; E\right)$ is continuous.
\end{theorem}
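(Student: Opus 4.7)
The plan is to combine the local well-posedness result from Proposition \ref{theorem 3.3} with the a-priori bounds from Lemma \ref{lemma 3.5} via a standard continuation argument. Given $u_0 \in E$, Proposition \ref{theorem 3.3} produces a unique local solution $u \in C([-T_0, T_0]; E)$ where $T_0 = T(R)$ depends only on an upper bound $R$ for $\|u_0\|_{X^2}$. I would define the maximal interval of existence $I = (T_-, T_+)$ and argue by contradiction: if, say, $T_+ < +\infty$, then by Lemma \ref{lemma 3.5} we have $\sup_{t \in [0, T_+)} \|u(t)\|_{X^2} \leq C(u_0) =: R^*$, so picking any $t_0 \in (T_+ - T(R^*)/2, T_+)$ and applying Proposition \ref{theorem 3.3} with initial datum $u(t_0) \in E$ at time $t_0$ extends the solution past $T_+$, contradicting maximality. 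Hence $I = \mathbb{R}$.

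The uniform-in-time bounds $\sup_{t \in \mathbb{R}} \|u(t)\|_{X^2} < +\infty$ and $\sup_{t \in \mathbb{R}} \||u(t)|^2 - 1\|_{L^2} < +\infty$ are then immediate consequences of \eqref{3.080} and \eqref{3.08} applied on $I = \mathbb{R}$.

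For the continuity of the flow map on an arbitrary interval $[-T, T]$, I would cover $[-T, T]$ by finitely many subintervals of length at most $T(2R^*)$, where $R^*$ is a uniform $X^2$ bound valid on a neighborhood (in $E$) of the fixed initial datum $u_0$, obtained by combining the conservation-law bound for $u_0$ with the local continuity statement of Proposition \ref{theorem 3.3}. On the first subinterval containing $0$, the continuity of $u_0 \mapsto u$ into $C([-T(2R^*), T(2R^*)]; E)$ is exactly the statement in Proposition \ref{theorem 3.3}. Iterating: for initial data $\tilde u_0$ close to $u_0$ in $E$, the conservation laws guarantee that $\|\tilde u(t)\|_{X^2} \leq 2R^*$ on a neighborhood (in $E$) of $u_0$, so Proposition \ref{theorem 3.3} applies at each restart time, and the continuity propagates by composition of the local flow maps.

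The main technical point to be careful about is ensuring that the iterated continuity argument does not lose control of $\|\tilde u(t)\|_{X^2}$ as one steps through the subintervals: one needs the conservation-law constant $C(u_0)$ to depend continuously (or at least upper-semi-continuously) on the $E$-data, so that a small perturbation of $u_0$ in $E$ gives a comparable uniform bound for the perturbed solution. This follows from the explicit form of $I_1$ and $I_2$ in Lemma \ref{lemma 3.5} together with Lemma \ref{lemma 2.1} and Lemma \ref{lemma 2.2}, since $I_1$ and $I_2$ are continuous functionals on $E$. Once this uniform control is in hand, uniqueness on $\mathbb{R}$ and the global continuity of the flow follow routinely from the local statement. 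No new obstruction arises beyond what was already handled in Proposition \ref{theorem 3.3} and Lemma \ref{lemma 3.5}.
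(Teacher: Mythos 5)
Your proposal is correct and follows essentially the same route as the paper: the paper's proof is a one-line appeal to Proposition \ref{theorem 3.3}, the a-priori bounds \eqref{3.080}--\eqref{3.08} of Lemma \ref{lemma 3.5}, and ``the classical bootstrap argument,'' which is precisely the continuation scheme you spell out. Your write-up merely makes explicit the details the paper compresses (the contradiction argument at $T_+$, the covering of $[-T,T]$ for flow-map continuity, and the continuous dependence of the conservation-law constants on the data), all of which are consistent with the paper's intent.
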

\begin{proof}
From Lemma \ref{lemma 3.5}, we know that $\|u(t)\|_{X^2}$ and $\||u|^2-1\|_{L^2}$ is uniformly bounded in $t$, then by the classical bootstrap argument, we infer that the solution exists globally (with uniform bound) in time, which yields the global well-posedness of (\ref{1.1}) in $E$. 
\end{proof}
\section{Explicit formula for chiral solutions}
\label{section 4}
In \cite{2}, Killip, Laurens and Vişan established an explicit formula of the $H_{+}^s(\mathbb{R})$($s\geq 0$) solution to (\ref{1.1}). In this section, we establish an explicit formula of the solution to (\ref{1.1}) in $E_{+}$.
\begin{theorem}
\label{theorem 4.1}
Given $u_0 \in E_{+}$, let $u \in C\left(\mathbb{R}; E_{+}\right)$ be the corresponding solution of (\ref{1.1}). Then we give the following explicit formula: For every $t \in \mathbb{R}$ and for every $z \in \mathbb{C}_{+}: = \{z \in \mathbb{C}: \operatorname{Im}(z)>0\}$, $u(t, z)$ identifies to
\begin{equation}
\label{4.1}
\begin{aligned}
& \quad \mathrm{e}^{i  t \partial_{x}^{2}} u_{0} (z) - \frac{t}{ i \pi} I_{+}\left[\left( G +2 t L_{u_{0}}-z \mathrm{Id}\right)^{-1} \left(T_{u_{0}}T_{\bar{u}_0}\mathrm{e}^{-i t \partial_{x}^{2}}  \left(\frac{\mathrm{e}^{i  t \partial_{x}^{2}} u_{0}(x)- \mathrm{e}^{i  t \partial_{x}^{2}} u_{0}(z)}{x-z}\right)\right)\right]\\ & = 
\mathrm{e}^{i  t \partial_{x}^{2}} u_{0} (z) \\ & - 2t \left[\left(Id + 2t\mathrm{e}^{i t \partial_{x}^{2}}T_{u_{0}}T_{\bar{u}_0}\mathrm{e}^{-i t \partial_{x}^{2}}(G-z I d)^{-1} \right)^{-1}\mathrm{e}^{i t \partial_{x}^{2}}T_{u_{0}}T_{\bar{u}_0}\mathrm{e}^{-i t \partial_{x}^{2}}  \left(\frac{\mathrm{e}^{i  t \partial_{x}^{2}} u_{0}(x)- \mathrm{e}^{i  t \partial_{x}^{2}} u_{0}(z)}{x-z}\right)\right] (z). 
\end{aligned}
\end{equation}
Here $u(t,z)$ and $\mathrm{e}^{i t \partial_{x}^{2}} u_{0}(z)$ are defined by the Poisson's formulation (\ref{2.6}).
\end{theorem}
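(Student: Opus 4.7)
The plan is to adapt the explicit-formula strategy of G\'erard for Benjamin--Ono \cite{1} and the $H^s_+$ version for CM-DNLS \cite{2} to the present Hardy--Zhidkov setting. First I would use Remark \ref{remark 3.4} and Lemma \ref{lemma 2.4} to approximate $u_0 \in E_+$ by smoother data $u_0^{(n)}$ with $(u_0^{(n)})^{(3)} \in L^2$, converging to $u_0$ in $E_+$. By continuity of the flow map (Theorem \ref{theorem 1.1}), the corresponding solutions $u^{(n)}(t)$ converge to $u(t)$ in $E_+$, so provided every ingredient of the right-hand side of \eqref{4.1} is continuous in $u_0$ at fixed $t$ and $z$ with respect to the $E_+$-topology, it suffices to prove \eqref{4.1} for such smooth initial data.

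On the smooth level, the proof would rely on two structural ingredients. First, the Lax equation \eqref{3.7} generates a unitary family $U(t)$ on $L^2$ with $\dot U = B_u U$, $U(0) = \mathrm{Id}$, and $L_{u(t)} = U(t) L_{u_0} U(t)^*$. Second, a Fourier-side computation yields $e^{it\partial_x^2} G\, e^{-it\partial_x^2} = G - 2tD$; combined with $L_{u_0}|_{L^2_+} = D + T_{u_0}T_{\bar u_0}$, this produces the key operator identity
\begin{equation*}
  e^{it\partial_x^2}\bigl(G + 2tL_{u_0}\bigr) e^{-it\partial_x^2} = G + 2t\bigl(e^{it\partial_x^2} T_{u_0}T_{\bar u_0}\, e^{-it\partial_x^2}\bigr),
\end{equation*}
which already factorizes the resolvent as
\begin{equation*}
  \bigl(G + 2tL_{u_0} - z\,\mathrm{Id}\bigr)^{-1} = e^{-it\partial_x^2}(G-z)^{-1}\bigl(\mathrm{Id} + 2t\,e^{it\partial_x^2}T_{u_0}T_{\bar u_0}e^{-it\partial_x^2}(G-z)^{-1}\bigr)^{-1} e^{it\partial_x^2}
\end{equation*}
and thereby converts the first line of \eqref{4.1} into the Neumann-series form on the second line.

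To derive the correction term, I would write $u(t) = e^{it\partial_x^2}u_0 + v(t)$ with $v(t) \in H^2_+ \subset L^2_+$ (as in the proof of Lemma \ref{lemma 3.2}), so that by \eqref{1.30}, $v(t,z) = \tfrac{1}{2i\pi} I_+\bigl((G - z\,\mathrm{Id})^{-1} v(t)\bigr)$. Combining the Lax conjugation with the key identity above and a Duhamel representation of $v$, one should obtain after algebraic manipulation
\begin{equation*}
  v(t,z) = -\tfrac{t}{i\pi}\,I_+\!\left[(G + 2tL_{u_0} - z\,\mathrm{Id})^{-1}\!\left(T_{u_0}T_{\bar u_0}\,e^{-it\partial_x^2}\bigl[(G-z)^{-1} e^{it\partial_x^2}u_0\bigr]\right)\right],
\end{equation*}
which is precisely the correction in \eqref{4.1}. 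The expression is well-defined pointwise for $z \in \mathbb{C}_+$: by holomorphic extension, $e^{it\partial_x^2}u_0 \in L^\infty_+$ and the difference quotient $(G-z)^{-1} e^{it\partial_x^2}u_0$ lies in $L^2_+$.

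The main obstacle will be justifying these operator identities when $u_0 \in E_+ \setminus L^2$. The Toeplitz operators $T_{u_0}, T_{\bar u_0}$ are bounded on $L^2_+$ by $\|u_0\|_{L^\infty}$, but the existence and continuity (in $u_0 \in E_+$) of $(G + 2tL_{u_0} - z\,\mathrm{Id})^{-1}$ must be verified on a common dense domain, and the Schr\"odinger intertwining of $G$ must be handled carefully since $G$ is unbounded. A secondary delicacy in the density step is that the right-hand side of \eqref{4.1} is not $L^2$-continuous in $u_0$ globally, so continuity must be checked pointwise in $z$ using $|f(z)| \lesssim (\mathrm{Im}\,z)^{-1/p}\|f\|_{L^p}$ together with the uniform $X^2$-bound from Lemma \ref{lemma 2.4}.
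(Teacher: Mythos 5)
Your algebraic skeleton matches the paper's: the Lax unitaries $U(t)$ solving $U'=B_{u(t)}U$, the conjugation $\mathrm{e}^{-it\partial_x^2}G\,\mathrm{e}^{it\partial_x^2}=G+2tD$, and the resolvent factorization $\left(G+2tL_{u_0}-z\,\mathrm{Id}\right)^{-1}=\mathrm{e}^{-it\partial_x^2}(G-z\,\mathrm{Id})^{-1}\bigl(\mathrm{Id}+2t\,\mathrm{e}^{it\partial_x^2}T_{u_0}T_{\bar u_0}\mathrm{e}^{-it\partial_x^2}(G-z\,\mathrm{Id})^{-1}\bigr)^{-1}\mathrm{e}^{it\partial_x^2}$ are all correct and are exactly what converts the first line of \eqref{4.1} into the second. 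But that conversion is the easy, purely algebraic part. The heart of the theorem is the first line itself, i.e.\ the representation of $u(t,z)$ through the resolvent $(G+2tL_{u_0}-z\,\mathrm{Id})^{-1}$, and for this you offer only ``Lax conjugation $+$ Duhamel representation of $v$ $+$ algebraic manipulation.'' That is precisely the step requiring proof. The paper derives it from the commutator identity $[G,B_u]=2L_u+i[G,L_u^2]$, integrated to $U(t)^*GU(t)=2tL_{u_0}+\mathrm{e}^{-itL_{u_0}^2}G\,\mathrm{e}^{itL_{u_0}^2}$, combined with the representation $f(z)=\lim_{\delta\to 0}\tfrac{1}{2i\pi}\langle (G-z\,\mathrm{Id})^{-1}f,\chi_\delta\rangle$ and the approximate transport laws $U(t)^*(u\chi_\varepsilon)\approx \mathrm{e}^{-itL_{u_0}^2}(u_0\chi_\varepsilon)$, $U(t)^*\chi_\delta\approx \mathrm{e}^{-itL_{u_0}^2}\chi_\delta$. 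None of these ingredients appear in your outline, and it is doubtful your route can close: the Lax conjugation identities govern the full solution $u$, not the remainder $v=u-\mathrm{e}^{it\partial_x^2}u_0$, which does not evolve under the conjugated flow, so a Duhamel formula for $v$ does not plug into $U(t)^*GU(t)$ in any evident way.

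Moreover, your regularization targets the wrong obstruction. What distinguishes $E_+$ from the $H^s_+$ setting of \cite{2} is not smoothness but decay: $u(t)\notin L^2(\mathbb{R})$, so $I_+$, the pairing with $\chi_\delta$, and $U(t)^*$ cannot be applied to $u$ directly. Replacing $u_0$ by data with $(u_0^{(n)})^{(3)}\in L^2(\mathbb{R})$ leaves this untouched --- such data are still not square-integrable --- so ``prove \eqref{4.1} for smooth data first'' faces the identical difficulty, while the continuity in $u_0$ of the right-hand side of \eqref{4.1} that your density step requires is itself nontrivial and never needed in the paper. The paper's key device is instead to multiply the solution by $\chi_\varepsilon(x)=(1-i\varepsilon x)^{-1}$, so that $u\chi_\varepsilon\in L^2_+(\mathbb{R})$; since $u\chi_\varepsilon$ only approximately satisfies the evolution, one tracks the errors $r_\varepsilon=\partial_t(u\chi_\varepsilon)-B_u(u\chi_\varepsilon)+iL_u^2(u\chi_\varepsilon)$ and $R_\varepsilon=U(t)^*(u\chi_\varepsilon)-\mathrm{e}^{-itL_{u_0}^2}(u_0\chi_\varepsilon)$, shows they vanish as $\varepsilon\to 0$, and passes to the limit only at the very end via \eqref{1.20}, \eqref{2.6}, Lemma \ref{lemma 2.3} and dominated convergence. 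Your decomposition $u=\mathrm{e}^{it\partial_x^2}u_0+v$ with $v\in H^2_+$ is a reasonable instinct for handling non-decay, but absent either a genuine derivation of the formula for $v$ or a cutoff argument of the paper's type, the proposal contains no proof of the main identity.
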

\begin{proof}
We satrt with the derivation method as in \cite[Section 3]{1}. For every $f \in L_{+}^2(\mathbb{R})$, from (\ref{1.10}) we know that
\begin{align*}
\forall z \in \mathbb{C}_{+}, \quad f(z)=\frac{1}{2 \pi} \int_{0}^{\infty} \mathrm{e}^{i z \xi} \hat{f}(\xi) d \xi.
\end{align*}
While, in view of the Plancherel theorem, we have, in $L^{2}(0,+\infty)$,
\begin{align*}
\hat{f}(\xi)=\lim _{\delta \rightarrow 0} \int_{\mathbb{R}} \mathrm{e}^{-i x \xi} \frac{f(x)}{1+i \delta x} d x=\lim _{\delta \rightarrow 0}\left\langle S(\xi)^{*} f, \chi_{\delta}\right\rangle,
\end{align*}
where 
\begin{align*}
\chi_{\delta}(x):=\frac{1}{1-i \delta x}.
\end{align*}
Plugging the second formula into the first one, we infer
\begin{align*}
f(z) & =\lim _{\delta \rightarrow 0} \frac{1}{2 \pi} \int_{0}^{\infty} \mathrm{e}^{i z \xi}\left\langle S(\xi)^{*} f, \chi_{\delta}\right\rangle d \xi \\ & =\lim _{\delta \rightarrow 0} \frac{1}{2 \pi} \int_{0}^{\infty} \mathrm{e}^{i z \xi}\left\langle\mathrm{e}^{-i \xi G} f, \chi_{\delta}\right\rangle d \xi \\ & =\lim _{\delta \rightarrow 0} \frac{1}{2 i \pi}\left\langle(G-z \mathrm{Id})^{-1} f, \chi_{\delta}\right\rangle.
\end{align*}
We then use the family $U(t)$ of unitary operators defined by the linear initial value problem in $\mathscr{L}\left(L_{+}^{2}(\mathbb{R})\right)$,
\begin{align*}
U^{\prime}(t)=B_{u(t)} U(t), U(0)=\mathrm{Id}.
\end{align*}
For every $z \in \mathbb{C}_{+}$, we have
\begin{align*}
(u\chi_{\varepsilon})(t, z) & =\lim _{\delta \rightarrow 0} \frac{1}{2 i \pi}\left\langle U(t)^{*}(G-z \mathrm{Id})^{-1} (u\chi_{\varepsilon})(t) , U(t)^{*} \chi_{\delta}\right\rangle \\ & =\lim _{\delta \rightarrow 0} \frac{1}{2 i \pi}\left\langle\left(U(t)^{*} G U(t)-z \mathrm{Id}\right)^{-1} U(t)^{*} (u\chi_{\varepsilon})(t), U(t)^{*} \chi_{\delta}\right\rangle.
\end{align*}
It is not difficult to see that
\begin{equation}
\left[G, B_{u}\right]=2 L_{u}+i\left[G,\left(L_{u}\right)^{2}\right],
\end{equation}
then we calculate
\begin{align*}
\frac{d}{d t} U(t)^{*} G U(t) & =U(t)^{*}\left[G, B_{u(t)}\right] U(t) \\ & =U(t)^{*}\left(2 L_{u(t)}+i\left[G, L_{u(t)}^{2}\right]\right) U(t) \\ & = 2 L_{u_{0}}+i\left[U(t)^{*} G U(t), L_{u_{0}}^{2}\right].
\end{align*}
Integrating this ODE, we get
\begin{align*}
U(t)^{*} G U(t)=2 t L_{u_{0}}+\mathrm{e}^{-i t L_{u_{0}}^{2}} G \mathrm{e}^{i t L_{u_{0}}^{2}}.
\end{align*}
Let us determine the other terms in the inner product. We have 
\begin{align*}
\frac{d}{d t} U(t)^{*} (u\chi_{\varepsilon}) & =U(t)^{*}\left(\partial_{t} (u\chi_{\varepsilon})-B_{u(t)}  (u\chi_{\varepsilon})\right) \\ & =-i U(t)^{*} L_{u(t)}^{2} (u\chi_{\varepsilon}) + U(t)^{*} r_{\varepsilon}\\ &=-i L_{u_{0}}^{2} U(t)^{*} ( u\chi_{\varepsilon}) + U(t)^{*}r_{\varepsilon},
\end{align*}
from which we infer
\begin{align*}
R_{\varepsilon} : = U(t)^{*} (u\chi_{\varepsilon}) - \mathrm{e}^{-i t L_{u_{0}}^{2}}  (u_{0} \chi_{\varepsilon}) \underset{\varepsilon \rightarrow 0}{\longrightarrow} 0 
\end{align*}
in $L_{+}^2(\mathbb{R})$.\\\\
Moreover, we can easily observe that
\begin{align*}
B_{u}\chi_{\delta} - i L_u^2 \chi_{\delta} \underset{\delta \rightarrow 0}{\longrightarrow} 0 
\end{align*}
in $L_{+}^2(\mathbb{R})$.\\\\
So we have
\begin{align*}
\frac{d}{d t} U(t)^{*} \chi_{\delta} & = -U(t)^{*} B_{u}\chi_{\delta} \\ & = -i\lim_{\delta \rightarrow 0}  U(t)^{*} (L_u^2 \chi_{\delta}) \\ & = -i\lim_{\delta \rightarrow 0} L_{u_{0}}^2 U(t)^{*}\chi_{\delta}
\end{align*}
By integrating in time, we infer
\begin{align*}
U(t)^{*} \chi_{\delta}-\mathrm{e}^{-i t L_{u_{0}}^{2}} \chi_{\delta} \underset{\delta \rightarrow 0}{\longrightarrow} 0 
\end{align*}
in $L_{+}^2(\mathbb{R})$. Plugging these informations into the formula which gives $u\chi_{\varepsilon}$, we infer 
\begin{align*}
&\quad (u\chi_{\varepsilon}) (t,z)\\ & =\lim _{\delta \rightarrow 0} \frac{1}{2 i \pi}\left\langle\left(U(t)^{*} G U(t)-z \mathrm{Id}\right)^{-1} U(t)^{*} (u\chi_{\varepsilon})(t), U(t)^{*} \chi_{\delta}\right\rangle \\ & = \lim _{\delta \rightarrow 0} \frac{1}{2 i \pi}\left\langle\left(\mathrm{e}^{-i t L_{u_{0}}^{2}} G \mathrm{e}^{i t L_{u_{0}}^{2}}+2 t L_{u_{0}}-z \mathrm{Id}\right)^{-1} \left(\mathrm{e}^{-i t L_{u_{0}}^{2}} (u_{0}\chi_{\varepsilon}) + R_{\varepsilon}\right), \mathrm{e}^{-i t L_{u_{0}}^{2}} \chi_{\delta}\right\rangle \\ & = \lim _{\delta \rightarrow 0} \frac{1}{2 i \pi}\left\langle\left( G +2 t L_{u_{0}}-z \mathrm{Id}\right)^{-1} \left(u_{0}\chi_{\varepsilon} + \mathrm{e}^{i t L_{u_{0}}^{2}} R_{\varepsilon}\right), \chi_{\delta}\right\rangle \\ & = \frac{1}{2 i \pi} I_{+}\left[\left( G +2 t L_{u_{0}}-z \mathrm{Id}\right)^{-1} \left(u_{0}\chi_{\varepsilon} + \mathrm{e}^{i t L_{u_{0}}^{2}} R_{\varepsilon}\right)\right]
\end{align*}
We can easily see that
\begin{align*}
(u\chi_{\varepsilon}) (t,z) \underset{\varepsilon \rightarrow 0}{\longrightarrow} u (t,z), \quad \forall z \in \mathbb{C}_{+}
\end{align*}
and
\begin{align*}
\frac{1}{2 i \pi} I_{+}\left[\left( G +2 t L_{u_{0}}-z \mathrm{Id}\right)^{-1} R_{\varepsilon}\right] \underset{\varepsilon \rightarrow 0}{\longrightarrow} 0,  \quad \forall z \in \mathbb{C}_{+},
\end{align*}
so we only need to give the limit of $\frac{1}{2 i \pi} I_{+}\left[\left( G +2 t L_{u_{0}}-z \mathrm{Id}\right)^{-1} \left(u_{0}\chi_{\varepsilon}\right)\right]$ as $\varepsilon$ tends to 0.\\\\
In fact, we have
\begin{align*}
 G +2 t L_{u_{0}}-z & = G+2tD+2tT_{u_0} T_{\bar{u}_0} -z Id \\ & = \mathrm{e}^{-it\partial_x^2} G \mathrm{e}^{it\partial_x^2} + 2t T_{u_0} T_{\bar{u}_0} -z Id \\ & = \mathrm{e}^{-it\partial_x^2} \left(G+ 2t \mathrm{e}^{it\partial_x^2} T_{u_0} T_{\bar{u}_0}\mathrm{e}^{-it\partial_x^2} -z Id\right)\mathrm{e}^{it\partial_x^2}.
\end{align*}
Combining the above formula and (\ref{1.30}), we infer
\begin{align*}
& \quad \frac{1}{2 i \pi} I_{+}\left[\left( G +2 t L_{u_{0}}-z \mathrm{Id}\right)^{-1} \left(u_{0}\chi_{\varepsilon}\right)\right] \\ & = \frac{1}{2 i \pi} I_{+}\left[\left(G+2 t \mathrm{e}^{i  t \partial_{x}^{2}} T_{u_{0}} T_{\bar{u}_0}\mathrm{e}^{-i t \partial_{x}^{2}}-z \mathrm{Id}\right)^{-1} \mathrm{e}^{i  t \partial_{x}^{2}} (u_{0}\chi_{\varepsilon})\right] \\ & = \left[ \left(Id+2 t \mathrm{e}^{i  t \partial_{x}^{2}} T_{u_{0}} T_{\bar{u}_0}\mathrm{e}^{-i t \partial_{x}^{2}}(G-zId)^{-1}\right)^{-1} \mathrm{e}^{i  t \partial_{x}^{2}} (u_{0}\chi_{\varepsilon})\right](z).
\end{align*}
From the identity
\begin{align*}
(Id + A)^{-1} = Id - (Id + A)^{-1} A,
\end{align*}
we know that
\begin{align*}
& \quad \left(Id+2 t \mathrm{e}^{i  t \partial_{x}^{2}} T_{u_{0}} T_{\bar{u}_0}\mathrm{e}^{-i t \partial_{x}^{2}}(G-zId)^{-1}\right)^{-1} \\ & = Id - 2t \left(Id+2 t \mathrm{e}^{i  t \partial_{x}^{2}} T_{u_{0}} T_{\bar{u}_0}\mathrm{e}^{-i t \partial_{x}^{2}}(G-zId)^{-1}\right)^{-1} \mathrm{e}^{i  t \partial_{x}^{2}} T_{u_{0}} T_{\bar{u}_0}\mathrm{e}^{-i t \partial_{x}^{2}} (G-zId)^{-1}.
\end{align*}
Then we infer 
\begin{align*}
 & \quad \left(Id+2 t \mathrm{e}^{i  t \partial_{x}^{2}} T_{u_{0}} T_{\bar{u}_0}\mathrm{e}^{-i t \partial_{x}^{2}}(G-zId)^{-1}\right)^{-1} \mathrm{e}^{i  t \partial_{x}^{2}} (u_{0}\chi_{\varepsilon}) \\ & = \mathrm{e}^{i  t \partial_{x}^{2}} (u_{0}\chi_{\varepsilon})\\ & - 2t \left(Id + 2t\mathrm{e}^{i t \partial_{x}^{2}}T_{u_{0}}T_{\bar{u}_0}\mathrm{e}^{-i t \partial_{x}^{2}}(G-z I d)^{-1} \right)^{-1}\mathrm{e}^{i t \partial_{x}^{2}}T_{u_{0}}T_{\bar{u}_0}\mathrm{e}^{-i t \partial_{x}^{2}} (G-zId)^{-1} \mathrm{e}^{i  t \partial_{x}^{2}} (u_{0}\chi_{\varepsilon}) \\ & = \mathrm{e}^{i  t \partial_{x}^{2}} (u_{0}\chi_{\varepsilon})\\ & - 2t \left(Id + 2t\mathrm{e}^{i t \partial_{x}^{2}}T_{u_{0}}T_{\bar{u}_0}\mathrm{e}^{-i t \partial_{x}^{2}}(G-z I d)^{-1} \right)^{-1}\mathrm{e}^{i t \partial_{x}^{2}}T_{u_{0}}T_{\bar{u}_0}\mathrm{e}^{-i t \partial_{x}^{2}}  \left(\frac{\mathrm{e}^{i  t \partial_{x}^{2}} (u_{0}\chi_{\varepsilon})(x)- \mathrm{e}^{i  t \partial_{x}^{2}} (u_{0}\chi_{\varepsilon})(z)}{x-z}\right).
\end{align*}
The second equality above comes from (\ref{1.20}).\\\\
Let $\varepsilon$ tend to 0, we know that
\begin{align*}
u_0 \chi_{\varepsilon} \underset{\varepsilon \rightarrow 0}{\longrightarrow} u_0 \text{ pointwisely }
\end{align*}
and
\begin{align*}
(u_0 \chi_{\varepsilon})' \underset{\varepsilon \rightarrow 0}{\longrightarrow} u_0' \text{ in } H_{+}^1(\mathbb{R}).
\end{align*}
By Lemma \ref{lemma 2.3}, we infer
\begin{align*}
\mathrm{e}^{i  t \partial_{x}^{2}} (u_{0}\chi_{\varepsilon})  = u_{0}\chi_{\varepsilon} +\left(\mathrm{e}^{i  t \partial_{x}^{2}} (u_{0}\chi_{\varepsilon})-u_0\chi_{\varepsilon}\right)  \underset{\varepsilon \rightarrow 0}{\longrightarrow} u_{0}+\left(\mathrm{e}^{i  t \partial_{x}^{2}} u_{0}-u_0\right)= \mathrm{e}^{i  t \partial_{x}^{2}} u_{0} \text{ pointwisely. }
\end{align*}
Then from the Poisson's formulation (\ref{2.6}) and the dominated convergence theorem, we can deduce that
\begin{align*}
\mathrm{e}^{i  t \partial_{x}^{2}} (u_{0}\chi_{\varepsilon})(z) \underset{\varepsilon \rightarrow 0}{\longrightarrow} \mathrm{e}^{i  t \partial_{x}^{2}} u_{0} (z), \quad \forall z \in \mathbb{C}_{+}.
\end{align*}
Again by the dominated convergence theorem, we can deduce that, for every $z \in \mathbb{C}_{+}$,
\begin{align*}
\frac{\mathrm{e}^{i  t \partial_{x}^{2}} (u_{0}\chi_{\varepsilon})(x)- \mathrm{e}^{i  t \partial_{x}^{2}} (u_{0}\chi_{\varepsilon})(z)}{x-z} \underset{\varepsilon \rightarrow 0}{\longrightarrow} \frac{\mathrm{e}^{i  t \partial_{x}^{2}} u_{0} (x)- \mathrm{e}^{i  t \partial_{x}^{2}} u_{0}(z)}{x-z} \text{ in } L_{+}^2(\mathbb{R}).
\end{align*}
Thus we have
\begin{align*}
& \quad \lim_{\varepsilon \rightarrow 0} \frac{1}{2 i \pi} I_{+}\left[\left( G +2 t L_{u_{0}}-z \mathrm{Id}\right)^{-1} \left(u_{0}\chi_{\varepsilon}\right)\right] \\ & = \mathrm{e}^{i  t \partial_{x}^{2}} u_{0} (z) \\ & - 2t \left[\left(Id + 2t\mathrm{e}^{i t \partial_{x}^{2}}T_{u_{0}}T_{\bar{u}_0}\mathrm{e}^{-i t \partial_{x}^{2}}(G-z I d)^{-1} \right)^{-1}\mathrm{e}^{i t \partial_{x}^{2}}T_{u_{0}}T_{\bar{u}_0}\mathrm{e}^{-i t \partial_{x}^{2}}  \left(\frac{\mathrm{e}^{i  t \partial_{x}^{2}} u_{0}(x)- \mathrm{e}^{i  t \partial_{x}^{2}} u_{0}(z)}{x-z}\right)\right] (z) \\ & = \mathrm{e}^{i  t \partial_{x}^{2}} u_{0} (z) - \frac{t}{ i \pi} I_{+}\left[\left( G +2 t L_{u_{0}}-z \mathrm{Id}\right)^{-1} \left(T_{u_{0}}T_{\bar{u}_0}\mathrm{e}^{-i t \partial_{x}^{2}}  \left(\frac{\mathrm{e}^{i  t \partial_{x}^{2}} u_{0}(x)- \mathrm{e}^{i  t \partial_{x}^{2}} u_{0}(z)}{x-z}\right)\right)\right].
\end{align*}
The proof is complete.
\end{proof}
\begin{remark}
In fact, the formula (\ref{4.1}) is also valid for the $H_{+}^s(\mathbb{R})$($s > \frac{1}{2}$) solution to (\ref{1.1}).
\end{remark}
\begin{remark}
From Theorem \ref{theorem 4.1}, if we write $u(t,x)= \mathrm{e}^{i  t \partial_{x}^{2}} u_{0}(x)+ v(t,x)$ as in the proof of Proposition \ref{theorem 3.3}, we can also establish an explicit formula for $v(t,x) \in C\left(\mathbb{R}; H_{+}^2(\mathbb{R})\right)$: For every $t \in \mathbb{R}$ and for every $z \in \mathbb{C}_{+}$, $v(t, z)$ identifies to
\begin{align*}
& \quad -\frac{t}{ i \pi} I_{+}\left[\left( G +2 t L_{u_{0}}-z \mathrm{Id}\right)^{-1} \left(T_{u_{0}}T_{\bar{u}_0}\mathrm{e}^{-i t \partial_{x}^{2}}  \left(\frac{\mathrm{e}^{i  t \partial_{x}^{2}} u_{0}(x)- \mathrm{e}^{i  t \partial_{x}^{2}} u_{0}(z)}{x-z}\right)\right)\right] \\ & = -2t \left[\left(Id + 2t\mathrm{e}^{i t \partial_{x}^{2}}T_{u_{0}}T_{\bar{u}_0}\mathrm{e}^{-i t \partial_{x}^{2}}(G-z I d)^{-1} \right)^{-1}\mathrm{e}^{i t \partial_{x}^{2}}T_{u_{0}}T_{\bar{u}_0}\mathrm{e}^{-i t \partial_{x}^{2}}  \left(\frac{\mathrm{e}^{i  t \partial_{x}^{2}} u_{0}(x)- \mathrm{e}^{i  t \partial_{x}^{2}} u_{0}(z)}{x-z}\right)\right] (z).
\end{align*}
\end{remark}
\noindent As studied in \cite{24, 27, 13, 25}, a natural idea is to use the explicit formula to study the zero dispersion limit of solutions to \eqref{1.1} in  $E_{+}$. However, we lack a suitable entry point to study this problem. In fact, to study the zero dispersion limit,  we shall consider the following equation
\begin{align*}
i \partial_{t} u^{\varepsilon}+\varepsilon\partial_{x}^2 u^{\varepsilon}-2\Pi D\left(|u^{\varepsilon}|^{2}\right)u^{\varepsilon}=0, \quad (t,x ) \in \mathbb{R} \times \mathbb{R}
\end{align*}
with $\varepsilon > 0$. For $u^{\varepsilon}(0,\cdot) = u_0 (\cdot) \in L_{+}^2(\mathbb{R})$, by the $L^2$ conservation, we can deduce that $\|u^{\varepsilon}\|_{L^2} = \|u_0\|_{L^2}$, so there exists a subsequence $\varepsilon_j>0$ tending to $0$ such that $u^{\varepsilon_j}(t)$ has a weak limit in $L_{+}^2(\mathbb{R})$. If all these weak limits coincide, we call this weak limit the zero dispersion limit. Badreddine proved the existence of the zero dispersion limit of solutions to \eqref{1.1} with $u_0 \in L_{+}^2(\mathbb{R})\cap L^{\infty}(\mathbb{R})$ using the explicit formula of \eqref{1.1} \cite{13}. However, for $u^{\varepsilon}(0,\cdot) = u_0(\cdot) \in E$, we cannot find a suitable norm in which the solution  $u^{\varepsilon}(t, \cdot)$  is uniformly bounded with respect to  $\varepsilon>0$, so we lack a suitable entry point to study the zero dispersion limit. Nevertheless, we have discovered an interesting result: using the explicit formula provided in Theorem \ref{theorem 4.1}, we can derive the explicit formula for $u^{\varepsilon}(t,z)$: for all $z \in \mathbb{C}_{+}: = \{z \in \mathbb{C}: \operatorname{Im}(z)>0\}$, we have
\begin{align*}
u^{\varepsilon}(t,z) & =  \mathrm{e}^{i  \varepsilon t \partial_{x}^{2}} u_{0} (z)\\ & -  \frac{t}{ i \pi} I_{+}\left[\left( G +2\varepsilon t D +2 t T_{u_{0}}T_{\bar{u}_{0}}-z \mathrm{Id}\right)^{-1} \left(T_{u_{0}}T_{\bar{u}_0}\mathrm{e}^{-i\varepsilon t \partial_{x}^{2}}  \left(\frac{\mathrm{e}^{i \varepsilon t \partial_{x}^{2}} u_{0}(x)- \mathrm{e}^{i \varepsilon t \partial_{x}^{2}} u_{0}(z)}{x-z}\right)\right)\right].
\end{align*}
From the above formula and Lemma \ref{lemma 2.3}, we can deduce that $u^{\varepsilon}(t,z)$ converges pointwisely in $\mathbb{C}_{+}$ to
\begin{align*}
u_{0} (z) -  \frac{t}{ i \pi} I_{+}\left[\left( G +2 t T_{u_{0}}T_{\bar{u}_{0}}-z \mathrm{Id}\right)^{-1} \left(T_{u_{0}}T_{\bar{u}_0}  \left(\frac{ u_{0}(x)-  u_{0}(z)}{x-z}\right)\right)\right].
\end{align*}
Based on the difficulty in finding a suitable entry point, it is currently difficult to explain how the above pointwise limit is related to any weak limit of $u^{\varepsilon}$.\\\\
Another interesting potential application is using the explicit formula to study the long time behavior of solutions to \eqref{1.1} in  $E_{+}$. Unlike $H_{+}^2(\mathbb{R})$ solutions, there exist solitons to \eqref{1.1} in  $E_{+}$ (see \cite{14} for details), this may lead to the long time behavior of solutions in  $E_{+}$  being more than just scattering.

\end{document}